\newtheorem{thm}{Theorem}[section]
\newtheorem{lem}[thm]{Lemma}
\newtheorem{cor}[thm]{Corollary}
\theoremstyle{definition}
\newtheorem{rem}[]{Remark}[section]
\newcommand\R{{\mathbb R}}
\numberwithin{equation}{section}
\begin{document}

\begin{frontmatter}



\title{Regularity of 3D axisymmetric Navier-Stokes equations}

\author{Hui Chen}
\ead{poorpaul@zju.edu.cn}
\author{Daoyuan Fang}
\ead{dyf@zju.edu.cn}
\author{Ting Zhang}
\ead{zhangting79@zju.edu.cn}

\address{Department of Mathematics, Zhejiang University, Hangzhou 310027, China}

\begin{abstract}
In this paper, we study the three-dimensional axisymmetric Navier-Stokes system with nonzero swirl. By establishing a new key inequality for the pair $(\frac{\omega^{r}}{r},\frac{\omega^{\theta}}{r})$, we get several Prodi-Serrin type regularity criteria based on the angular velocity, $u^\theta$. Moreover, we obtain the global well-posedness result if the initial angular velocity $u_{0}^{\theta}$ is appropriate small in the critical space $L^{3}(\R^{3})$. Furthermore, we also get several Prodi-Serrin type regularity criteria based on one component of the solutions, say $\omega^3$ or $u^3$.
\end{abstract}

\begin{keyword}Navier-Stokes equations \sep regularity criteria \sep global well-posedness.



\end{keyword}

\end{frontmatter}


\section{Introduction}
Consider the initial value problem of 3D Navier-Stokes equations:
\begin{equation}
\left\{
\begin{array}{l}\label{1.1}
\partial_{t}\mathbf{u}+(\mathbf{u}\cdot\nabla)\mathbf{u}-\Delta \mathbf{u}+\nabla p=\mathbf{0},~(t,x)\in\R^+\times\R^{3}~,\\
\nabla\cdot \mathbf{u}=0~,\\
\mathbf{u}|_{t=0}=\mathbf{u_{0}}~.
\end{array}
\right.
\end{equation}
where $\mathbf{u}(t,x)=(u^{1},u^{2},u^{3})$, $p(t,x)$ and $\mathbf{u_{0}}$ denote, respectively, the fluid velocity field,  the pressure, and  the given initial velocity field.

For given $\mathbf{u_{0}}\in L^{2}(\R^{3})$ with $\mathrm{div}~\mathbf{u_{0}}=0$ in the sense of distribution, a global weak solution $\mathbf{u}$ to the Navier-Stokes equations was
constructed by Leray \cite{Leray} and Hopf \cite{Hopf}, which is called Leray-Hopf weak solution. Regularity of such Leray-Hopf weak solution in three dimension is one of the most outstanding open problems in the mathematical fluid mechanics.

Researchers are interested in the classical problem of finding sufficient conditions for the weak solutions such that they become regular.
The important result is usually referred as Prodi-Serrin (P-S) conditions (see \cite{Escauriaza,Fabes,Giga,Prodi,Serrin,Struwe,Takahashi}), i.e. if additional the weak solution $\mathbf{u}$  belongs to $L^{p,q}_{T}$, where $\frac{2}{p}+\frac{3}{q}\leq 1$, $3\leq q\leq\infty$,
then the weak solution becomes regular. Hugo Beir\~ao da Veiga \cite{Hugo} proved a  P-S type result with two components of $\mathbf{u}$. For the one component case,  Y. Zhou and Pokorn\'{y} \cite{Zhou} obtained the regularity criterion by imposing the integrability of  single component $u^{3}$ of the velocity field. Furthermore, Cao and Titi \cite{Titi} established the regularity criterion involving only one entry of the velocity gradient tensor, likely $\partial_{3}u^{3}$. However, the integral condition here is not optimal in the sense of scaling considerations. For such considerations, people also work on the regularity criteria involving one component of $\mathbf{u}$ and one other component, say velocity gradient tensor, likely \cite{Patrick}. Recently, the second author and Chenyin Qian \cite{CY.Qian1,CY.Qian2,CY.Qian3} developed the arguments above. They \cite{CY.Qian3} got several almost critical regularity conditions such that the weak
solutions of the 3D Navier-Stokes equations become regular, based on one component of the solutions,
say $u^3$ and $\partial_3u_3$.\\

Considering the  axisymmetric Navier-Stokes equations, there is a scaling invariant quantity $\|ru^{\theta}\|_{L^{\infty,\infty}_{T}}$ if $ru_{0}^{\theta}\in L^{\infty}(\R^{3})$, see \cite{Pokorny1, JG.Liu1} etc. So, it is very important to consider the critical regularity conditions for $u^\theta$ for the  axisymmetric Navier-Stokes equations.  Here, we assume that a solution $\mathbf{u}$ of the system (\ref{1.1}) of the form
$$
\mathbf{u}(t,x)=u^{r}(t,r,x_{3})\mathbf{e_{r}}+u^{\theta}(t,r,x_{3})\boldsymbol{e_{\theta}}+u^{3}(t,r,x_{3})\mathbf{e_{3}},
$$
where
\begin{equation*}
\mathbf{e_{r}}=(\frac{x_{1}}{r},\frac{x_{2}}{r},0),~\boldsymbol{e_{\theta}}=(-\frac{x_{2}}{r},\frac{x_{1}}{r},0),~\mathbf{e_{3}}=(0,0,1),
\ r=\sqrt{x_{1}^{2}+x_{2}^{2}}.
\end{equation*}
In the above, $u^{\theta}$ is called the angular velocity. For the axisymmetric solutions of Navier-stokes system, we can equivalently reformulate (\ref{1.1}) as
\begin{equation}\label{1.2}
\left\{
\begin{array}{l}
\frac{\tilde{D}}{Dt}u^{r}-(\partial_{r}^{2}+\partial_{3}^{2}+\frac{1}{r}\partial_{r}-\frac{1}{r^{2}})u^{r}-\frac{(u^{\theta})^{2}}{r}+\partial_{r}p=0,\\
\frac{\tilde{D}}{Dt}u^{\theta}-(\partial_{r}^{2}+\partial_{3}^{2}+\frac{1}{r}\partial_{r}-\frac{1}{r^{2}})u^{\theta}+\frac{u^{\theta}u^{r}}{r}=0,\\
\frac{\tilde{D}}{Dt}u^{3}-(\partial_{r}^{2}+\partial_{3}^{2}+\frac{1}{r}\partial_{r})u^{3}+\partial_{3}p=0,\\
\partial_{r}u^{r}+\frac{1}{r}u^{r}+\partial_{3}u^{3}=0,\\
(u^{r},u^{\theta},u^{3})|_{t=0}=(u_{0}^{r},u_{0}^{\theta},u_{0}^{3}),
\end{array}
\right.
\end{equation}
where we denote the convection derivative $\frac{\tilde{D}}{Dt}$ as
$$
\frac{\tilde{D}}{Dt}=\partial_{t}+u^{r}\partial_{r}+u^{3}\partial_{3}.
$$

For the axisymmetric velocity field $\mathbf{u}$, we can also compute the vorticity $\boldsymbol{\omega}=\mathrm{curl}~\mathbf{u}$ as follows,
$$
\boldsymbol{\omega}=\omega^{r}\mathbf{e_{r}}+\omega^{\theta}\boldsymbol{e_{\theta}}+\omega^{3}\mathbf{e_{3}}
$$
with $
\omega^{r}=-\partial_{3}u^{\theta},~\omega^{\theta}=\partial_{3}u^{r}-\partial_{r}u_{3},~\omega^{3}=\partial_{r}u^{\theta}+\frac{u^{\theta}}{r}$. Furthermore, $(\omega^{r},\omega^{\theta},\omega^{3})$ satisfies
\begin{equation}\label{1.3}
\left\{
\begin{array}{l}
\frac{\tilde{D}}{Dt}\omega^{r}-(\partial_{r}^{2}+\partial_{3}^{2}+\frac{1}{r}\partial_{r}-\frac{1}{r^{2}})\omega^{r}-(\omega^{r}\partial_{r}+\omega^{3}\partial_{3})u^{r}=0,\\
\frac{\tilde{D}}{Dt}\omega^{\theta}-(\partial_{r}^{2}+\partial_{3}^{2}+\frac{1}{r}\partial_{r}-\frac{1}{r^{2}})\omega^{\theta}-\frac{2u^{\theta}\partial_{3}u^{\theta}}{r}-\frac{u^{r}\omega^{\theta}}{r}=0,\\
\frac{\tilde{D}}{Dt}\omega^{3}-(\partial_{r}^{2}+\partial_{3}^{2}+\frac{1}{r}\partial_{r})\omega^{3}-(\omega^{r}\partial_{r}+\omega^{3}\partial_{3})u^{3}=0,\\
(\omega^{r},\omega^{\theta},\omega^{3})|_{t=0}=(\omega_{0}^{r},\omega_{0}^{\theta},\omega_{0}^{3}),
\end{array}
\right.
\end{equation}
Let $\mathbf{b}=u^{r}\mathbf{e_{r}}+u^{3}\mathbf{e_{3}}$. Then, we have that
\begin{equation}
\mathrm{div}~\mathbf{b}=0~~\text{and}~~\mathrm{curl}~\mathbf{b}=\omega^{\theta}\boldsymbol{e_{\theta}}.\label{1.4}
\end{equation}
Therefore, $(\Theta,\Gamma)=(ru^{\theta},\frac{\omega^{\theta}}{r})$ satisfy
\begin{equation}
\left\{\begin{array}{l}
(\partial_{t}+\mathbf{b}\cdot \nabla-\Delta+\frac{2}{r}\partial_{r})\Theta=0\\
(\partial_{t}+\mathbf{b}\cdot \nabla-\Delta-\frac{2}{r}\partial_{r})\Gamma-\partial_{3}(\frac{u^{\theta}}{r})^{2}=0
\end{array}
\right.
\end{equation}
This is the key ingredient for us to study the regularity criteria of the axisymmetric Navier-Stokes equations. We recall that global well-posedness result was firstly proved under no swirl assumption, i.e. $u^{\theta}=0$, independently by Ukhovskii and Yudovich \cite{Ukhovskii} , and Ladyzhenskaya \cite{Ladyzhenskaya}, also \cite{Leonardi} for a refined proof.\\

When the angular velocity $u^{\theta}$ is not trivial, the global well-posedness problem is still open. Recently, tremendous efforts and interesting progress have been made on the regularity problem of the axisymmetric Navier-Stokes equations\cite{Chae,Chen1,Chen2,ZF.Z,Koch,Pokorny2,Pokorny3,Z.Lei,Pokorny1,P.Zhang}. P. Zhang and the third author \cite{P.Zhang} investigated the global well-posedness with various types of smallness conditions on the initial angular velocity $u_{0}^{\theta}$ of the initial velocity field. In \cite{Chen1,Chen2}, Chen, Strain, Tsai and Yau  proved that the suitable weak solutions are smooth if the velocity field $\mathbf{u}$ satisfies $r|\mathbf{u}|\leq C<\infty$. Applying the Liouville type theorem for the ancient solutions of Navier-Stokes equations, Zhen Lei and Qi S. Zhang \cite{Z.Lei} obtained the similar result $\mathbf{b}\in L^\infty_T(BMO^{-1})$.
Moreover, there are many significant results under the sufficient condition for regularity of axially symmetric solution  of type
$\omega^{\theta}\in L^{p,q}_{T}$, $\frac{2}{p}+\frac{3}{q}\leq2,\frac{3}{2}<q<\infty$ in \cite{Chae}, and $\omega^{\theta}\in L^{1}((0,T);\dot{B}_{\infty,\infty}^{0})$ in \cite{ZF.Z}. And concerning on one velocity component, it has been shown that the axisymmetric solution is smooth in $(0,T)\times\R^3$ when $r^d(u^{r})^-\in L^{p,q}_{T}$ with $(d,p,q)\in\{(-1,1)\times(1,\infty)\times(\frac{3}{2},\infty),\frac{2}{p}+\frac{3}{q}\leq1-d\}$ or
$\{(-1,1)\times\{\infty\}\times(\frac{3}{2},\infty], \frac{3}{q}<1-d\}$ or $\{\{-1\}\times(1,\infty)\times(\frac{3}{2},\infty),\frac{2}{p}+\frac{3}{q}\leq2\}$ in \cite{Pokorny3}. In particular, the authors in \cite{Pokorny3} also considered the conditions on $u^{\theta}$,  $r^du^{\theta}\in L^{p,q}_{T}$ with $(d,p,q)\in\{[0,\frac{s-4}{2s})\times(4,\infty]\times(2,\infty],\frac{2}{p}+\frac{3}{q}<1-d\}$ or $\{(-\infty,\frac{5}{6})\times\{\infty\}\times\{\infty\}\}$.

The following  regularity criteria of $u^\theta$  greatly develop the corresponding regularity criteria in \cite{Pokorny2,Pokorny3,Pokorny1,P.Zhang}.

\begin{thm}\label{thm1.2}
Let $\mathbf{u}$ be an axisymmetric weak solution of the Navier-Stokes equations (\ref{1.1}) with the axisymmetric initial data $\mathbf{u_{0}}\in H^{2}(\R^{3})$ and $\mathrm{div}~\mathbf{u_{0}}=0$. If one of following conditions holds true
\begin{enumerate}
  \item[(1)] $r^{d} u^{\theta} \in L^{p,q}_{T}$,
where $\frac{2}{p}+\frac{3}{q}\leq1-d$,   $\frac{3}{1-d} < q\leq \infty$, $\frac{2}{1-d} \leq p\leq\infty$,
  \item[(2)] $r^{d} u^{\theta} \in L^{\infty,\frac{3}{1-d}}_{T}$, and there exist  $\alpha>0$ and sufficient small $\varepsilon>0$  such that
  $$\|r^{d} u^{\theta} 1_{r\leq\alpha} \|_{L^{\infty,\frac{3}{1-d}}_{T}}\leq \varepsilon,$$
\end{enumerate}
 where $0\leq d<1$,
then $\mathbf{u}$ is smooth in $(0,T]\times \R^{3}$.
\end{thm}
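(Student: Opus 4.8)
The plan is to reduce the whole theorem to an a priori bound for the energy of the pair $(\frac{\omega^{r}}{r},\frac{\omega^{\theta}}{r})$ and then bootstrap to full smoothness. Writing $\Omega:=\frac{\omega^{r}}{r}$ and recalling $\Gamma=\frac{\omega^{\theta}}{r}$, I would first record the two cheap inputs: the basic energy estimate and the maximum principle for $\Theta=ru^{\theta}$, which keeps $\|ru^{\theta}\|_{L^{\infty}}$ controlled. Then I would invoke the key inequality established earlier for $(\Omega,\Gamma)$, which bounds
\[
\frac{d}{dt}\left(\|\Omega\|_{L^{2}}^{2}+\|\Gamma\|_{L^{2}}^{2}\right)+\text{(dissipation)}
\]
by the single dangerous contribution coming from the swirl, namely the source $\partial_{3}(\frac{u^{\theta}}{r})^{2}$ in the $\Gamma$-equation. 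The entire argument then turns on absorbing this term using the hypothesis $r^{d}u^{\theta}\in L^{p,q}_{T}$.

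The central algebraic observation I would exploit is that the pair is not independent: since $\omega^{r}=-\partial_{3}u^{\theta}$, we have $\Omega=-\partial_{3}(\frac{u^{\theta}}{r})$, so the source is exactly
\[
\partial_{3}\left(\frac{u^{\theta}}{r}\right)^{2}=2\,\frac{u^{\theta}}{r}\,\partial_{3}\frac{u^{\theta}}{r}=-2\,\frac{u^{\theta}}{r}\,\Omega .
\]
Hence the term to control is $-2\int\frac{u^{\theta}}{r}\,\Omega\,\Gamma\,dx$, or equivalently (integrating by parts in $x_{3}$) $\int(\frac{u^{\theta}}{r})^{2}\partial_{3}\Gamma\,dx$. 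I would estimate it by H\"older, writing $\frac{u^{\theta}}{r}=r^{-1-d}(r^{d}u^{\theta})$ and pairing the weighted factor $r^{d}u^{\theta}$ in $L^{q}$ against $\Omega,\Gamma$ and their gradients through an anisotropic Sobolev/interpolation inequality; the constraint $\frac{2}{p}+\frac{3}{q}\le 1-d$ is precisely the one making the resulting powers of the $H^{1}$-norms match the available dissipation.

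In case (1) two sub-situations arise. When $\frac{2}{p}+\frac{3}{q}<1-d$ the term is strictly subcritical, so Young's inequality absorbs a piece into the dissipation and leaves an $L^{1}_{t}$ factor, and Gr\"onwall closes the estimate globally. When $\frac{2}{p}+\frac{3}{q}=1-d$ but $p<\infty$ (forced by $q>\frac{3}{1-d}$), the scaling-critical term is handled through the absolute continuity of $t\mapsto\int_{0}^{t}\|r^{d}u^{\theta}\|_{L^{q}}^{p}$: on each sufficiently short subinterval this quantity is small, which again permits absorption, and finitely many iterations cover $[0,T]$. Only the genuine endpoint $p=\infty$, $q=\frac{3}{1-d}$ escapes this mechanism, and that is exactly case (2): the smallness $\|r^{d}u^{\theta}1_{r\le\alpha}\|_{L^{\infty,3/(1-d)}_{T}}\le\varepsilon$ near the axis supplies the missing slack once $\varepsilon$ is small relative to the Sobolev constant, while on $r>\alpha$ the weight $\frac{1}{r}\le\frac{1}{\alpha}$ is bounded and that piece is a lower-order perturbation absorbed by the basic energy estimate and Gr\"onwall.

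Once $(\Omega,\Gamma)\in L^{\infty}_{t}L^{2}_{x}\cap L^{2}_{t}H^{1}_{x}$ is secured, I would recover smoothness by a standard bootstrap: these bounds control $\omega^{\theta}$, and via the Biot--Savart law for the swirl-free field $\mathbf{b}$ the full velocity and its higher derivatives, after which $H^{2}$ persistence and parabolic smoothing upgrade $\mathbf{u}$ to a classical solution on $(0,T]\times\R^{3}$. I expect the main obstacle to be the critical endpoint: making the near-axis/away-from-axis splitting quantitative, in particular controlling $\int(\frac{u^{\theta}}{r})^{2}\partial_{3}\Gamma\,dx$ near $r=0$ where the weight $r^{-1-d}$ is singular, which is precisely where the smallness of $\varepsilon$ and the sharp form of the key inequality for $(\frac{\omega^{r}}{r},\frac{\omega^{\theta}}{r})$ must be used together.
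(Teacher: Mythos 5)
Your proposal contains a genuine gap, and it sits at the very center of the argument. You claim that the energy inequality for the pair $(\Omega,\Gamma)=(\frac{\omega^{r}}{r},\frac{\omega^{\theta}}{r})$ is driven by ``the single dangerous contribution coming from the swirl,'' namely the source $\partial_{3}(\frac{u^{\theta}}{r})^{2}$ in the $\Gamma$-equation. That is false for this pair: the $\Omega$-equation carries its own source, the vortex-stretching term $(\omega^{r}\partial_{r}+\omega^{3}\partial_{3})\frac{u^{r}}{r}$, which does not vanish and is not lower order. (You may be conflating the $(\Omega,\Gamma)$ system with the $(\Theta,\Gamma)=(ru^{\theta},\frac{\omega^{\theta}}{r})$ system, where indeed only the $\Gamma$-equation has a source; but the $L^{2}$ energy of $\Theta$ is not what closes the argument.) In the paper this stretching term, after integration by parts, produces the terms
\begin{equation*}
I_{1}=\int_{\R^{3}}\Bigl|u^{\theta}\,\partial_{r}\tfrac{u^{r}}{r}\,\partial_{3}\Omega\Bigr|\,dx,
\qquad
I_{2}=\int_{\R^{3}}\Bigl|u^{\theta}\,\partial_{3}\tfrac{u^{r}}{r}\,\partial_{r}\Omega\Bigr|\,dx,
\end{equation*}
and these are the hard part of the proof: estimating them under the hypothesis $r^{d}u^{\theta}\in L^{p,q}_{T}$ requires two dedicated tools that your proposal never invokes and cannot do without --- the representation $\frac{u^{r}}{r}=\Delta^{-1}\partial_{3}\Gamma-2\frac{\partial_{r}}{r}\Delta^{-2}\partial_{3}\Gamma$ (yielding $\|\tilde\nabla\frac{u^{r}}{r}\|_{q}\lesssim\|\Gamma\|_{q}$ and $\|\tilde\nabla\tilde\nabla\frac{u^{r}}{r}\|_{q}\lesssim\|\partial_{3}\Gamma\|_{q}$), and a Sobolev--Hardy inequality to absorb the weight $r^{-d}$. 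The term you do treat, $-2\int\frac{u^{\theta}}{r}\Omega\Gamma\,dx$, is handled in the paper the same way you suggest ($I_{3}$), but it is the easier of the two sources; dropping $I_{1},I_{2}$ means your claimed differential inequality simply does not hold, so nothing downstream (Gr\"onwall, the endpoint splitting, the bootstrap) is justified.

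Two secondary remarks. First, in Case (1) your subdivision into strictly subcritical versus critical-with-$p<\infty$ is unnecessary: since $\frac{2}{p}+\frac{3}{q}\le 1-d$ forces $\frac{2}{1-\frac{3}{q}-d}\le p$, Young's inequality gives a Gr\"onwall factor $C(1+\|r^{d}u^{\theta}\|_{q}^{p})\in L^{1}(0,T)$ in both situations at once. Second, your closing bootstrap (``these bounds control $\omega^{\theta}$, and via Biot--Savart the full velocity'') is too loose: $\|\Gamma\|_{L^{\infty,2}_{t}}<\infty$ does not directly control $\|\omega^{\theta}\|_{2}$ since $r$ is unbounded; the paper instead multiplies the $u^{\theta}$-equation by $\frac{(u^{\theta})^{3}}{r^{2}}$ to convert the $\Gamma$-bound into $\|\frac{u^{\theta}}{r}\|_{L^{4,4}_{T}}<\infty$ and then cites the known regularity criterion for that quantity. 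This last step is standard and repairable; the omission of the vortex-stretching term is not.
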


To prove it, the key point is that we find the pair $(\Phi,\Gamma)=(\frac{\omega^{r}}{r},\frac{\omega^{\theta}}{r})$ satisfying the following equations
 \begin{equation}\label{1.6}
\left\{
\begin{array}{lcl}
\partial_{t}\Phi+(\mathbf{b}\cdot\nabla)\Phi-(\Delta+\frac{2}{r}\partial_{r})\Phi-(\omega^{r}\partial_{r}+\omega^{3}\partial_{3})\frac{u^{r}}{r}=0,\\
\partial_{t}\Gamma+(\mathbf{b}\cdot\nabla)\Gamma-(\Delta+\frac{2}{r}\partial_{r})\Gamma+2\frac{u^{\theta}}{r}\Phi=0,
\end{array}
\right.
\end{equation}
 and the  following identities:
\begin{equation}\label{j1}
\frac{1}{2}\frac{d}{dt}\Phi^{2}+\frac{1}{2}(\mathbf{b}\cdot\nabla)\Phi^{2}+ (\Delta+\frac{2}{r}\partial_{r})\Phi\cdot \Phi=(\omega^{r}\partial_{r}+\omega^{3}\partial_{3})\frac{u^{r}}{r} \Phi,
\end{equation}
\begin{equation}\label{j2}
\frac{1}{2}\frac{d}{dt}\Gamma^{2}+\frac{1}{2}(\mathbf{b}\cdot\nabla)\Gamma^{2}+(\Delta+\frac{2}{r}\partial_{r})\Gamma \cdot\Gamma
=-2\frac{u^{\theta}}{r}\Gamma \Phi.
\end{equation}
 In fact, Thomas Y. Hou and Congming Li \cite{Thomas} introduced a 1-dimensional model that first-order approximates the Navier-Stokes equations, which just corresponds to the zero right hand side of above identities.
 In additional, the following two technical points  are also important: the first is
  that we give the explicit expression of $\frac{u^r}{r}$ by $\Gamma$,
 see  Lemma \ref{lem2.4};
    the second is that we establish a general Sobolev-Hardy inequality,
 see  Lemma \ref{lem2.1}.
 By above techniques, one reduces the problems to
estimating certain terms of particular forms. For example,
  we need to control the term $
I_{1}=\int_{\R^{3}}\left|u^{\theta}\partial_{r}\frac{u^{r}}{r}\partial_{3}\Phi\right|~dx$ and others, see (\ref{3.2})-(\ref{3.3}).
From  H\"{o}lder's inequality, we have
$$
I_{1}
\leq\|r^{d}u^{\theta}\|_{q} ~\| \frac{\partial_{r}\frac{u^{r}}{r}}{r^{d}}~\|_{\frac{2q}{q-2}} ~\|\partial_{3}\Phi\|_{2}.
$$
Using the general Sobolev-Hardy inequality in  Lemma \ref{lem2.1}, we can bound $\| \frac{\partial_{r}\frac{u^{r}}{r}}{r^{d}}~\|_{\frac{2q}{q-2}}$ by $\|\tilde{\nabla}^k\frac{u^r}{r}\|_2$, $k=1$, $2$. Combining the explicit expression of $\frac{u^r}{r}$ by $\Gamma$ in  Lemma \ref{lem2.4}, one can bound $\| \frac{\partial_{r}\frac{u^{r}}{r}}{r^{d}}~\|_{\frac{2q}{q-2}}$ by $\|\partial_3^{(k-1)}\Gamma\|_2$ (please see the details in (\ref{3.5})).

\begin{rem}
  Our method fails in the case $d=1$. The key point is that one cannot bounded $\|\frac{f}{r}\|_{2}$ by $\|f\|_{H^1}$.
  In Theorem \ref{thm1.2}, we add a small assumption in the case $r^{d} u^{\theta} \in L^{\infty,\frac{3}{1-d}}_{T}$. The reason is that we cannot using Gronwall's inequality in the case $r^{d} u^{\theta} \in L^{\infty,\frac{3}{1-d}}_{T}$. To overcome this small assumption, one possible method should be the De Giorgi type argument or Nash type method (like \cite{Z.Lei}). But, using the De Giorgi type argument or Nash type method, one needs some additional condition on $\mathbf{b}$ to control the convection terms. In our proof, we just use the energy method and divergence free condition to estimate the system (\ref{1.6}), and the convection terms are not trouble (see (\ref{3.2})).
\end{rem}

\begin{rem} Theorem \ref{thm1.2} tells us that if $ru^{\theta}$ is  H\"{o}lder continuity at the variable $r$ uniformly, i.e. there exist a $\alpha>0$ and constant $C$, such that
$$r|u^{\theta}|\leq C r^{\alpha},~ a.e.~(t,x)\in (0,T)\times\R^{3}$$
then $\mathbf{u}$ is regular. And the authors in \cite{Chen1, Chen2, Z.Lei} drawn a similar argument by using the Nash-Moser method.
\end{rem}

\begin{rem}  Theorem \ref{thm1.2} implies that
if the angular velocity $u^{\theta}$ satisfies the Prodi-Serrin condition
    $$
    u^{\theta}\in L^{p}((0,T);L^{q}(\R^{3})),\ \frac{2}{p}+\frac{3}{q}\leq1,\  3< q\leq\infty,
    $$
  then the solution $\mathbf{u}$ is smooth in $(0,T]\times\R^3$.
In \cite{P.Zhang}, P. Zhang and the third author  obtained one special regularity criterion $u^\theta\in L^{4,6}_T$. From Theorem \ref{thm1.2} and the interpolation theorem, we have that
if $ru^\theta_0\in L^\infty$ and the angular velocity $u^{\theta}$ satisfies
    $$
     \frac{u^{\theta}}{r^d}\in L^{p}((0,T);L^{q}(\R^{3})),\ \frac{2}{p}+\frac{3}{q}\leq 1+d,\  0<d\leq 1,\ \frac{3}{1+d}< q\leq\infty,
    $$
  then the solution $\mathbf{u}$ is smooth in $(0,T]\times\R^3$.
\end{rem}

Moreover, we will prove a global regularity theorem  only on that the initial angular velocity $u_{0}^{\theta}$ is appropriately small in the critical space $L^{3}(\R^{3})$.

\begin{thm}\label{thm1.4}
Assume that the axisymmetric initial data $\mathbf{u_{0}}\in H^{2}(\R^{3})$ and $\mathrm{div}~\mathbf{u_{0}}=0$. There exist positive constants $C_{0}$, $C_{1}$ and $C_{2}$, such that if the initial angular velocity $u_{0}^{\theta}$ satisfies
\begin{equation}\label{1.7}
\|r^du^{\theta}_0\|_{\frac{3}{1-d}} \exp(\mathscr{A}) \leq\frac{1}{2C_{0}},
\end{equation}
where $0\leq d<1$, $\mathscr{A}=\mathscr{A}\left(\|\mathbf{u_{0}}\|_{2},\|\omega_{0}^{\theta}\|_{2},\|\frac{\omega_{0}}{r}\|_{2},\|\partial_{3}\frac{u_{0}^{\theta}}{r}\|_{2} \right)$ is defined by
$$
\mathscr{A}=C_{2}\|\mathbf{u_{0}}\|_{2}^{2}~(\|\omega_{0}^{\theta}\|_{2}^{2}+C_{1}(\|\frac{\omega_{0}}{r}\|_{2}+\|\partial_{3}\frac{u_{0}^{\theta}}{r}\|_{2})^{\frac{4}{3}}\|\mathbf{u_{0}}\|_{2}^{2}),
$$
then the system (\ref{1.1}) has a unique global strong solution $\mathbf{u}\in C(\R^+;H^{2}(\R^{3}))\cap L^{2}_{loc}(\R^+;\dot{H}^{3}(\R^{3}))$.
\end{thm}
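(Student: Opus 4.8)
The plan is to promote a local strong solution to a global one by a continuation argument anchored on the endpoint criterion of Theorem~\ref{thm1.2}(2). First I would invoke the standard local theory to obtain a unique maximal strong solution $\mathbf{u}\in C([0,T^{*});H^{2})\cap L^{2}_{loc}([0,T^{*});\dot H^{3})$, together with the energy balance $\|\mathbf{u}(t)\|_2^2+2\int_0^t\|\nabla\mathbf{u}\|_2^2\,ds=\|\mathbf{u_0}\|_2^2$. The goal is to show $T^{*}=\infty$; to this end I run a bootstrap on $[0,T]\subset[0,T^{*})$ under the running hypothesis that the scaling critical quantity $\|r^{d}u^{\theta}\|_{L^{\infty}_{t}L^{3/(1-d)}}$ stays at most twice its expected bound, and prove that (\ref{1.7}) forces it below the threshold $\varepsilon$ of Theorem~\ref{thm1.2}(2), so the solution cannot lose regularity.

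The heart of the argument is the closure of the energy of the pair $(\Phi,\Gamma)=(\frac{\omega^{r}}{r},\frac{\omega^{\theta}}{r})$ --- the new key inequality. Integrating (\ref{j1}) and (\ref{j2}) over $\R^{3}$, the convection contributions vanish because $\mathrm{div}\,\mathbf{b}=0$, while the diffusion produces nonnegative dissipation $D_\Phi,D_\Gamma\ge 0$, giving
$$\frac{1}{2}\frac{d}{dt}\left(\|\Phi\|_2^2+\|\Gamma\|_2^2\right)+D_\Phi+D_\Gamma=\int_{\R^3}(\omega^{r}\partial_{r}+\omega^{3}\partial_{3})\tfrac{u^{r}}{r}\,\Phi\,dx-2\int_{\R^3}\tfrac{u^{\theta}}{r}\Gamma\Phi\,dx.$$
For the stretching term I would substitute the explicit representation of $\frac{u^{r}}{r}$ through $\Gamma$ from Lemma~\ref{lem2.4} and apply the general Sobolev--Hardy inequality of Lemma~\ref{lem2.1}, bounding it by $\|\tilde\nabla\Phi\|_2$ and $\|\tilde\nabla\Gamma\|_2$ times coefficients integrable in time against the global dissipation $\int\|\nabla\mathbf{u}\|_2^2\,ds$; this is precisely where the combination $\|\mathbf{u_0}\|_2^2\|\omega_0^{\theta}\|_2^2$ entering $\mathscr{A}$ is born. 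For the swirl coupling I would exploit that $\Phi=-\partial_3\frac{u^{\theta}}{r}$ is itself a pure swirl quantity, so both factors $\frac{u^{\theta}}{r}$ and $\Phi$ are small when the swirl is small; H\"older together with Sobolev embedding then lets me absorb this term into $D_\Phi+D_\Gamma$ up to a remainder governed by the small critical norm. A Gronwall step produces $\sup_{[0,T]}(\|\Phi\|_2^2+\|\Gamma\|_2^2)+\int_0^T(D_\Phi+D_\Gamma)\,ds\le\mathscr{A}$, uniform in $T$, provided the running smallness hypothesis holds.

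Next I would propagate the critical norm itself. Testing the $u^{\theta}$ equation in (\ref{1.2}) against $r^{dq}|u^{\theta}|^{q-2}u^{\theta}$ with $q=\frac{3}{1-d}$ and integrating by parts, the convection term contributes $-d\int\frac{u^{r}}{r}|r^{d}u^{\theta}|^{q}\,dx$ (using $\mathrm{div}\,\mathbf{b}=0$), which combines with the reaction term $\frac{u^{r}u^{\theta}}{r}$ to leave $(1-d)\int\frac{u^{r}}{r}|r^{d}u^{\theta}|^{q}\,dx$, while the diffusion is of good sign and the $\frac{1}{r^{2}}$ term is favorable. Hence
$$\frac{d}{dt}\|r^{d}u^{\theta}\|_{q}\le C\,\big\|\tfrac{u^{r}}{r}\big\|_{*}\,\|r^{d}u^{\theta}\|_{q},$$
where the coefficient $\|\frac{u^{r}}{r}\|_{*}$ is again controlled, via Lemma~\ref{lem2.4} and Lemma~\ref{lem2.1}, by the $(\Phi,\Gamma)$ data and dissipation bounded in the previous step, so its time integral does not exceed $\mathscr{A}$. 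Gronwall's inequality then yields $\|r^{d}u^{\theta}(t)\|_{q}\le\|r^{d}u_0^{\theta}\|_{q}\exp(\mathscr{A})$, which by (\ref{1.7}) is at most $\frac{1}{2C_0}$. This both improves the running hypothesis (closing the bootstrap) and keeps the full critical norm so small that the hypothesis of Theorem~\ref{thm1.2}(2) is met; the solution therefore stays smooth, $\|\mathbf{u}(t)\|_{H^2}$ cannot blow up as $t\uparrow T^{*}$, forcing $T^{*}=\infty$, and uniqueness is inherited from the strong-solution class.

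I expect the main obstacle to be the global closure of the $(\Phi,\Gamma)$ energy in the second step. The difficulty is that $\Phi$ and $\Gamma$ are genuinely coupled and the vortex-stretching term is of critical order, so it cannot be estimated by dissipation alone; the interplay of the representation of $\frac{u^{r}}{r}$ via $\Gamma$ (Lemma~\ref{lem2.4}) with the weighted Sobolev--Hardy inequality (Lemma~\ref{lem2.1}) must be arranged so that every nonlinear contribution is either absorbed into $D_\Phi+D_\Gamma$ or carries a coefficient integrable in time against the a priori energy, with the swirl coupling contributing only a small perturbation. Verifying that this balance holds uniformly in $T$ --- and that the resulting constant is exactly the $\mathscr{A}$ of the statement --- is the technical crux on which the whole continuation argument rests.
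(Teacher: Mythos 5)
Your skeleton is the same as the paper's: a continuity/bootstrap argument on $\|r^{d}u^{\theta}\|_{L^{\infty}_{t}L^{3/(1-d)}}$, closed by an energy estimate for $(\Phi,\Gamma)$ under the running smallness hypothesis (the paper's Lemma \ref{L4.1}) and a Gronwall propagation of the critical norm (the paper's Lemma \ref{L4.3}), followed by a continuation criterion. But there is a genuine gap in your propagation step, precisely at the point you yourself flag as delicate. You claim the Gronwall coefficient $\|\frac{u^{r}}{r}\|_{*}$ in $\frac{d}{dt}\|r^{d}u^{\theta}\|_{q}\le C\|\frac{u^{r}}{r}\|_{*}\|r^{d}u^{\theta}\|_{q}$ is ``controlled by the $(\Phi,\Gamma)$ data and dissipation bounded in the previous step, so its time integral does not exceed $\mathscr{A}$.'' This cannot work as stated: what Lemmas \ref{lem2.4} and \ref{lem2.1} give you from $(\Phi,\Gamma)$ alone are bounds like $\|\frac{u^{r}}{r}\|_{6}\le C\|\Gamma\|_{2}$ or $\|\frac{u^{r}}{r}\|_{\infty}\le C\|\Gamma\|_{2}^{1/2}\|\tilde\nabla\Gamma\|_{2}^{1/2}$ (the $L^{2}$ norm of $\frac{u^{r}}{r}$ is \emph{not} controlled by $\|\Gamma\|_{2}$, by scaling). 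A uniform-in-time bound on $\|\Gamma\|_{2}$ together with $\int_{0}^{t}\|\tilde\nabla\Gamma\|_{2}^{2}ds<\infty$ then only yields $\int_{0}^{t}(\text{coefficient})\,ds\lesssim t^{\delta}$ for some $\delta>0$, i.e.\ a Gronwall factor $\exp(Ct^{\delta})$ growing in time. Then no matter how small $\|r^{d}u_{0}^{\theta}\|_{3/(1-d)}$ is, the running hypothesis is violated at some finite time, the bootstrap does not close, and you obtain neither global existence nor a $T$-independent smallness threshold of the form (\ref{1.7}).

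The missing idea is the paper's intermediate Lemma \ref{L4.2}: a separate $L^{\infty}_{t}L^{2}_{x}$ energy estimate for the \emph{unweighted} $\omega^{\theta}$, obtained by testing (\ref{1.3})$_{2}$ with $\omega^{\theta}$, bounding $\int_{\R^{3}}(\Gamma u^{r}-2\Phi u^{\theta})\omega^{\theta}dx$ by the already-uniform $(\Phi,\Gamma)$ bound times $\|\nabla\mathbf{u}\|_{2}^{2}$ (after Sobolev and Young), and integrating against the \emph{global} energy inequality (\ref{3.1}). With this in hand, the propagation step uses $\|\frac{u^{r}}{r}\|_{2}\le C\|\omega^{\theta}\|_{2}$ (Lemma \ref{lem2.3}) so that the true Gronwall coefficient is $\|\omega^{\theta}\|_{2}^{4}$, whose time integral is bounded uniformly in $t$ by $\|\omega^{\theta}\|_{L^{\infty}L^{2}}^{2}\int_{0}^{t}\|\nabla\mathbf{u}\|_{2}^{2}ds\le C\,\mathscr{A}$; it is \emph{here}, and not in the $(\Phi,\Gamma)$ estimate, that the combination $\|\mathbf{u_{0}}\|_{2}^{2}(\|\omega_{0}^{\theta}\|_{2}^{2}+C_{1}(\cdot)^{4/3}\|\mathbf{u_{0}}\|_{2}^{2})$ is born. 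Relatedly, your description of the $(\Phi,\Gamma)$ step is off: at the critical exponent no time-integrable coefficient appears there at all; the stretching and swirl terms are bounded by $C_{0}\|r^{d}u^{\theta}\|_{3/(1-d)}\|\tilde\nabla\Phi\|_{2}\|\tilde\nabla\Gamma\|_{2}$ and absorbed outright into the dissipation, giving the monotone bound (\ref{4.1}) with no Gronwall. Your final step (invoking Theorem \ref{thm1.2}(2) rather than the paper's Lemma \ref{lem2.6}) is acceptable modulo adjusting constants so that $\frac{1}{2C_{0}}\le\varepsilon$, but without the $\omega^{\theta}$ estimate the argument as proposed does not reach that step.
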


We also respective investigate the regularity criterion  of the vorticity component $\omega^{3}$ or  the velocity component $u^{3}$ by introducing the potential $\psi$ defined in \cite{JG.Liu1}.
\begin{thm}\label{thm1.1}
Let $\mathbf{u}$ be an axisymmetric weak solution of the Navier-Stokes equations  (\ref{1.1}) with the axisymmetric initial data $\mathbf{u_{0}}\in H^{2}(\R^{3})$ and $\mathrm{div}~\mathbf{u_{0}}=0$. If $ru_{0}^{\theta}\in L^{\infty}(\R^{3})$  and $\omega^{3}\in L^{p,q}_{T}$, $\frac{2}{p}+\frac{3}{q}\leq2$, $\frac{3}{2} < q<\infty$, $1 < p\leq\infty$,
then the weak solution $\mathbf{u}$ is smooth in $(0,T]\times \R^{3}$.
\end{thm}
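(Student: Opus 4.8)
The plan is to reduce the claim to an a priori bound for the pair $(\Phi,\Gamma)=(\frac{\omega^{r}}{r},\frac{\omega^{\theta}}{r})$ and to feed the hypothesis on $\omega^{3}$ into that bound through the potential $\psi$. As in the proof of Theorem \ref{thm1.2}, controlling $(\Phi,\Gamma)$ in $L^{\infty}_{T}L^{2}\cap L^{2}_{T}\dot H^{1}$ upgrades to $\mathbf{u}\in L^{\infty}_{T}H^{2}$ and hence to smoothness by a standard parabolic bootstrap, so it suffices to close an energy inequality for $(\Phi,\Gamma)$ on $(0,T)$. Two structural inputs drive the argument. First, the maximum principle for $\Theta=ru^{\theta}$ gives $\|ru^{\theta}\|_{L^{\infty}_{t,x}}\le\|ru^{\theta}_{0}\|_{\infty}=:M<\infty$, providing pointwise control of the swirl both away from and near the axis. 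Second --- and this is where the potential $\psi$ of \cite{JG.Liu1} enters --- the axisymmetric relation $\omega^{3}=\frac1r\partial_{r}(ru^{\theta})$ shows that $\omega^{3}$, together with axisymmetry and decay, determines the entire swirl vorticity: solving this first order ODE in $r$ recovers $u^{\theta}$, and then $\omega^{r}=-\partial_{3}u^{\theta}$. The potential $\psi$ is precisely the device that makes this recovery quantitative and regular across $r=0$; it plays, for $\omega^{3}$, the role that the explicit expression of $\frac{u^{r}}{r}$ in terms of $\Gamma$ (Lemma \ref{lem2.4}) plays in Theorem \ref{thm1.2}.

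Starting from the identities (\ref{j1})--(\ref{j2}) and integrating over $\R^{3}$, the convection terms drop by $\mathrm{div}\,\mathbf{b}=0$ and the dissipation produces $\|\tilde\nabla\Phi\|_{2}^{2}+\|\tilde\nabla\Gamma\|_{2}^{2}$ plus favorably signed lower order contributions, leaving
\begin{equation*}
\frac12\frac{d}{dt}\big(\|\Phi\|_{2}^{2}+\|\Gamma\|_{2}^{2}\big)+\|\tilde\nabla\Phi\|_{2}^{2}+\|\tilde\nabla\Gamma\|_{2}^{2}\lesssim\Big|\int_{\R^{3}}(\omega^{r}\partial_{r}+\omega^{3}\partial_{3})\tfrac{u^{r}}{r}\,\Phi\,dx\Big|+\Big|\int_{\R^{3}}\tfrac{u^{\theta}}{r}\Phi\Gamma\,dx\Big|.
\end{equation*}
The decisive term is the one carrying $\omega^{3}$ explicitly. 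By H\"older, $\big|\int\omega^{3}\partial_{3}\tfrac{u^{r}}{r}\,\Phi\,dx\big|\le\|\omega^{3}\|_{q}\,\|\partial_{3}\tfrac{u^{r}}{r}\|_{\frac{2q}{q-2}}\,\|\Phi\|_{2}$, which is exactly the estimate for $I_{1}$ in Theorem \ref{thm1.2} but with $\omega^{3}$ in place of $u^{\theta}$; the general Sobolev--Hardy inequality (Lemma \ref{lem2.1}) bounds $\|\partial_{3}\tfrac{u^{r}}{r}\|_{\frac{2q}{q-2}}$ by $\|\tilde\nabla^{k}\tfrac{u^{r}}{r}\|_{2}$ and Lemma \ref{lem2.4} converts the latter into $\|\partial_{3}^{\,k-1}\Gamma\|_{2}$. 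Choosing the interpolation exponent according to $\frac2p+\frac3q\le2$ lets me absorb the dissipation factors into $\|\tilde\nabla\Phi\|_{2}^{2}+\|\tilde\nabla\Gamma\|_{2}^{2}$ and leaves a prefactor proportional to $\|\omega^{3}\|_{q}^{p}$, integrable in time precisely because $\omega^{3}\in L^{p,q}_{T}$. The companion stretching term $\int\omega^{r}\partial_{r}\tfrac{u^{r}}{r}\Phi\,dx$ (writing $\omega^{r}=r\Phi$) is treated by the same Lemma \ref{lem2.4} plus Sobolev--Hardy combination, while the swirl coupling $\int\tfrac{u^{\theta}}{r}\Phi\Gamma\,dx$ is reduced to the Theorem \ref{thm1.2} estimate after recovering $u^{\theta}$ from $\omega^{3}$ via $\psi$ and invoking the bound $M$ to tame the on-axis behavior.

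With all terms absorbed, Gronwall's inequality yields $(\Phi,\Gamma)\in L^{\infty}_{T}L^{2}\cap L^{2}_{T}\dot H^{1}$; recovering the poloidal velocity $\mathbf{b}$ from $\omega^{\theta}=r\Gamma$ by the axisymmetric Biot--Savart law and iterating the energy estimates then promotes the solution to $\mathbf{u}\in C((0,T];H^{2})$, from which smoothness in $(0,T]\times\R^{3}$ follows. The $u^{3}$ version is handled identically, using $\psi$ to express the relevant quantities through $u^{3}$.

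The main obstacle is the scaling of the decisive term. The component $\omega^{3}$ controls only the radial combination $\partial_{r}u^{\theta}+\frac{u^{\theta}}{r}$, whereas the sources in (\ref{1.6}) also involve the axial structure of the swirl (through $\Phi=-\partial_{3}\frac{u^{\theta}}{r}$ and $\omega^{r}=-\partial_{3}u^{\theta}$); the function of the potential $\psi$ is exactly to recover this missing $\partial_{3}$ information from $\omega^{3}$ in a quantitatively controlled, singularity free way. The second delicate point is the borderline behavior near $r=0$: the H\"older exponent $\frac{2q}{q-2}$ forces Lemma \ref{lem2.1} to be applied at a weight whose integrability is only guaranteed for $q>\frac32$, and the condition $\frac2p+\frac3q\le2$ is what makes the time prefactor $\|\omega^{3}\|_{q}^{p}$ integrable. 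Thus the endpoint $q=\frac32$ is genuinely critical and must be excluded, consistent with the stated hypotheses.
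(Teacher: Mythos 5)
Your proposal contains the right germ of an idea, but the argument you actually build around it fails, and in the claimed range of $q$ it cannot be repaired. The decisive step is your H\"older estimate $\bigl|\int\omega^{3}\partial_{3}\tfrac{u^{r}}{r}\,\Phi\,dx\bigr|\le\|\omega^{3}\|_{q}\,\|\partial_{3}\tfrac{u^{r}}{r}\|_{\frac{2q}{q-2}}\,\|\Phi\|_{2}$ followed by Lemma \ref{lem2.1} and Lemma \ref{lem2.4}. Theorem \ref{thm1.1} allows $\tfrac32<q<\infty$, but for $q\le 2$ the exponent $\tfrac{2q}{q-2}$ is negative or undefined, and for $2<q<3$ it exceeds $6$, i.e.\ it lies outside the range $q_*\in[2,6]$ that $\|\Gamma\|_{2}$ and $\|\tilde\nabla\Gamma\|_{2}$ can control in $\R^{3}$ (Lemma \ref{lem2.1} with $s=0$); this is exactly why Theorem \ref{thm1.2} requires $q>3$ when $d=0$, and ``the same estimate with $\omega^{3}$ in place of $u^{\theta}$'' inherits that restriction, so your scheme stops at $q\ge3$ rather than reaching $q>\tfrac32$. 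A second gap: the companion stretching term $\int\omega^{r}\partial_{r}\tfrac{u^{r}}{r}\,\Phi\,dx$ is not touched by the hypothesis on $\omega^{3}$ at all, and your suggestion to write $\omega^{r}=r\Phi$ leaves an unbounded weight $r$ that neither Lemma \ref{lem2.1} nor Lemma \ref{lem2.4} can absorb (in the proof of Theorem \ref{thm1.2} this term is integrated by parts into terms carrying $u^{\theta}$, which is what the hypothesis there controls). Finally, the potential $\psi$ cannot play the role you assign to it: by Lemma \ref{lem2.3} it is the poloidal stream function, $u^{r}=-\partial_{3}\psi$, $ru^{3}=\partial_{r}(r\psi)$; it carries no information about $u^{\theta}$ or $\omega^{3}$, and in the paper it is used only for Theorem \ref{thm1.5} and Corollary \ref{cor1.6}.

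The remark you make in passing --- that $\omega^{3}=\tfrac1r\partial_{r}(ru^{\theta})$, so integrating in $r$ recovers $ru^{\theta}$ --- is in fact the whole proof, provided you run it at the level of the hypothesis rather than inside the energy estimate. The paper applies the one-dimensional Hardy inequality (Lemma \ref{lem2.2}) with $\sigma=2q-1$, $F=ru^{\theta}=\int_{0}^{r}s\,\omega^{3}\,ds$ and $f=r\omega^{3}$, which gives $\|\tfrac{u^{\theta}}{r}\|_{q}\le C(q)\|\omega^{3}\|_{q}$ for every $q>1$; then the maximum principle $\|ru^{\theta}\|_{L^{\infty,\infty}_{T}}\le\|ru^{\theta}_{0}\|_{\infty}$ (Lemma \ref{lem2.3} iv), which is where the hypothesis $ru^{\theta}_{0}\in L^{\infty}$ enters) and the pointwise interpolation $|u^{\theta}|^{2}\le|ru^{\theta}|\,\bigl|\tfrac{u^{\theta}}{r}\bigr|$ yield $u^{\theta}\in L^{2p,2q}_{T}$ with $\tfrac{2}{2p}+\tfrac{3}{2q}\le1$ and $2q>3$. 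This lands precisely in the admissible range of Theorem \ref{thm1.2} with $d=0$, and no new energy estimate is needed: the doubling of the exponents by interpolation is what converts the range $q>\tfrac32$ for $\omega^{3}$ into the range $q>3$ that Theorem \ref{thm1.2} covers, which your direct approach cannot do.
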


\begin{thm}\label{thm1.5}
Let $\mathbf{u}$ be an axisymmetric weak solution of the Navier-Stokes equations (\ref{1.1}) with the axisymmetric initial data $\mathbf{u_{0}}\in H^{2}(\R^{3})$ and $\mathrm{div}~\mathbf{u_{0}}=0$. If $u^{3}$ satisfies one of following conditions,
\begin{enumerate}
\item[(1)]  $r^{d} u^{3} \in L^{p,q}_{T}$, $\frac{2}{p}+\frac{3}{q}\leq1-d$,   $\frac{3}{1-d} < q\leq\infty$, $\frac{2}{1-d} \leq p\leq\infty$,
\item[(2)]   $r^{d} u^{3} \in L^{\infty,\frac{3}{1-d}}_{T}$, and there exist  $\beta>0$ and sufficient small $\varepsilon_1>0$  such that
  $$\|r^{d} u^{3} 1_{r\leq\beta} \|_{L^{\infty,\frac{3}{1-d}}_{T}}\leq \varepsilon_1,$$
\end{enumerate}
where $0\leq d<1$,
then $\mathbf{u}$  is smooth in $(0,T]\times \R^{3}$.
\end{thm}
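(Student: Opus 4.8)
The plan is to run the energy method on the coupled system (\ref{1.6}) for the pair $(\Phi,\Gamma)=(\frac{\omega^{r}}{r},\frac{\omega^{\theta}}{r})$ that underlies Theorem \ref{thm1.2}, but now feeding the hypothesis in through $u^{3}$ rather than $u^{\theta}$. By the local theory for axisymmetric data in $H^{2}(\R^{3})$, it suffices to establish an a priori bound keeping $\|\Phi(t)\|_{2}^{2}+\|\Gamma(t)\|_{2}^{2}$ and the time integral of the dissipation finite up to $T$; a standard bootstrap, as in the proof of Theorem \ref{thm1.2}, then upgrades this to smoothness. Accordingly I would integrate the identities (\ref{j1}) and (\ref{j2}) over $\R^{3}$, discard the convection terms using $\mathrm{div}\,\mathbf{b}=0$, and retain on the left the dissipation controlling the weighted gradient norms of $\Phi$ and $\Gamma$. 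This leaves on the right the vortex-stretching term $\int_{\R^{3}}(\omega^{r}\partial_{r}+\omega^{3}\partial_{3})\frac{u^{r}}{r}\,\Phi\,dx$ and the swirl-coupling term $-2\int_{\R^{3}}\frac{u^{\theta}}{r}\Phi\Gamma\,dx$.

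The decisive difference from Theorem \ref{thm1.2} lies in how these two terms are processed: rather than converting the stretching term into an integral against $u^{\theta}$, I would use the divergence-free relations to expose $u^{3}$. Writing $g=\frac{u^{r}}{r}$, the identity $\partial_{r}u^{r}+\frac{u^{r}}{r}+\partial_{3}u^{3}=0$ yields $\partial_{r}g=-\frac{\partial_{3}u^{3}}{r}-\frac{2g}{r}$, so with $\omega^{r}=r\Phi$ one gets $\omega^{r}\partial_{r}g\,\Phi=-\Phi^{2}\partial_{3}u^{3}-2g\Phi^{2}$; integrating the first piece by parts in $x_{3}$ produces $2\int_{\R^{3}}u^{3}\Phi\,\partial_{3}\Phi\,dx$, which is exactly the shape controllable by a Prodi-Serrin norm of $u^{3}$. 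The potential $\psi$ introduced in \cite{JG.Liu1} enters to express $u^{r}$ and $g$ through $\psi$ and, via the elliptic problem $-(\Delta-\frac{1}{r^{2}})\psi=\omega^{\theta}$, ultimately through $\Gamma$, which is what makes the residual factors $g$ and its derivatives estimable by $\Gamma$ as in Lemma \ref{lem2.4}.

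To close the estimate I would bound each $u^{3}$-term by H\"{o}lder's inequality as $\int_{\R^{3}}|r^{d}u^{3}\,\frac{\Phi}{r^{d}}\,\partial_{3}\Phi|\,dx\leq\|r^{d}u^{3}\|_{q}\,\|\frac{\Phi}{r^{d}}\|_{\frac{2q}{q-2}}\,\|\partial_{3}\Phi\|_{2}$, use the general Sobolev-Hardy inequality of Lemma \ref{lem2.1} to dominate $\|\frac{\Phi}{r^{d}}\|_{\frac{2q}{q-2}}$ by an interpolation of $\|\Phi\|_{2}$ and $\|\tilde{\nabla}\Phi\|_{2}$, and use Lemma \ref{lem2.4} to trade the $g=\frac{u^{r}}{r}$ factors for $\|\partial_{3}^{k-1}\Gamma\|_{2}$; Young's inequality then absorbs the top-order factors into the dissipation. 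Under hypothesis (1) the constraint $\frac{2}{p}+\frac{3}{q}\leq1-d$ makes the surviving coefficient time-integrable, so Gronwall's inequality finishes the proof. Under the critical hypothesis (2), where Gronwall is unavailable (as noted in the Remark after Theorem \ref{thm1.2}), I would instead split each integral into $\{r\leq\beta\}$ and $\{r>\beta\}$: on $\{r\leq\beta\}$ the smallness $\|r^{d}u^{3}1_{r\leq\beta}\|_{L^{\infty,\frac{3}{1-d}}_{T}}\leq\varepsilon_{1}$ lets the borderline term be absorbed by the dissipation, while on $\{r>\beta\}$ the weight $r^{-d}$ is bounded and the remaining term is of lower order.

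The step I expect to be the main obstacle is the swirl-coupling term together with the $\omega^{3}\partial_{3}g$ part of the stretching term, since $\omega^{3}=\frac{1}{r}\partial_{r}(ru^{\theta})$ and $\frac{u^{\theta}}{r}$ both carry $u^{\theta}$, on which no hypothesis is placed. The way through is the cancellation built into the choice of the pair $(\Phi,\Gamma)$: setting $h=\frac{u^{\theta}}{r}$ one has $\Phi=-\partial_{3}h$, $\omega^{3}=2h+r\partial_{r}h$, and $\partial_{3}g=\Gamma+\frac{\partial_{r}u^{3}}{r}$, so the most dangerous contribution $2\int_{\R^{3}}h\Gamma\Phi\,dx$ arising from $\int_{\R^{3}}\omega^{3}\partial_{3}g\,\Phi\,dx$ exactly cancels the swirl term $-2\int_{\R^{3}}h\Gamma\Phi\,dx$. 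Confirming that the surviving $h$- and $\partial_{r}h$-terms are genuinely lower order, by reducing them through integration by parts and the vorticity relation $\frac{1}{r}\partial_{r}(r\omega^{r})+\partial_{3}\omega^{3}=0$ to quantities bounded by $\Phi$, $\Gamma$ and $u^{3}$ alone with all top-order derivatives absorbed into the dissipation, is the technical heart of the argument and the most delicate point.
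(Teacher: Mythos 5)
Your algebraic cancellation of $\pm2\int h\Gamma\Phi\,dx$ is correct, but the argument does not close, and this is a genuine gap rather than a deferred technicality. Writing $g=\frac{u^{r}}{r}$, $h=\frac{u^{\theta}}{r}$, your substitutions $r\partial_{r}g=-2g-\partial_{3}u^{3}$, $\partial_{3}g=\Gamma+\frac{\partial_{r}u^{3}}{r}$, $\omega^{3}=2h+r\partial_{r}h$ turn the right-hand side of the $(\Phi,\Gamma)$ energy identity, after the cancellation, into
\begin{equation*}
-2\int_{\R^{3}} g\,\Phi^{2}\,dx+2\int_{\R^{3}}u^{3}\,\Phi\,\partial_{3}\Phi\,dx
+\int_{\R^{3}}r\partial_{r}h\,\Gamma\,\Phi\,dx
+\int_{\R^{3}}\frac{\omega^{3}}{r}\,\partial_{r}u^{3}\,\Phi\,dx,
\end{equation*}
and only the second term has the Prodi--Serrin shape your hypothesis can handle. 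For the first term, the best available bound is $\|g\|_{6}\leq C\|\tilde\nabla g\|_{2}\leq C\|\Gamma\|_{2}$ (Lemma \ref{lem2.4}), which gives $|\int g\Phi^{2}\,dx|\leq C\|\Gamma\|_{2}\|\Phi\|_{2}^{3/2}\|\tilde\nabla\Phi\|_{2}^{1/2}\leq C\|\Gamma\|_{2}^{4/3}\|\Phi\|_{2}^{2}+\frac{1}{8}\|\tilde\nabla\Phi\|_{2}^{2}$: the coefficient is superlinear in the quantity $\|\Phi\|_{2}^{2}+\|\Gamma\|_{2}^{2}$ being estimated, so Gronwall fails and one only gets a Riccati-type local-in-time bound. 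For the third and fourth terms, $\partial_{r}h$ and $\partial_{r}u^{3}$ carry derivatives of $u^{\theta}$ and of $u^{3}$ on which no hypothesis is placed; the energy inequality only provides $\nabla\mathbf{u}\in L^{2,2}_{T}$, and an estimate such as $|\int r\partial_{r}h\,\Gamma\Phi\,dx|\leq C\|\nabla\mathbf{u}\|_{2}\|\Gamma\|_{2}^{1/2}\|\tilde\nabla\Gamma\|_{2}^{1/2}\|\tilde\nabla\Phi\|_{2}$ produces, after Young's inequality, the coefficient $\|\nabla\mathbf{u}\|_{2}^{4}$, which is not time-integrable. Integrating by parts to remove $\partial_{r}$ from $u^{3}$ only creates $1/r$ singularities and extra derivatives on $\Phi$ and $h$, so the reduction to ``lower order'' terms that you invoke as the technical heart does not materialize.

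The paper's proof takes a different route that avoids the $(\Phi,\Gamma)$ system entirely at this stage. It uses the stream function of Lemma \ref{lem2.3}: $u^{r}=-\partial_{3}\psi$, $ru^{3}=\partial_{r}(r\psi)$, whence $\frac{\psi}{r^{1-d}}=\int_{0}^{1}s^{1-d}(sr)^{d}u^{3}(sr,x_{3})\,ds$ and, by Minkowski's inequality, $\|\frac{\psi}{r^{1-d}}\|_{q}\leq C\|r^{d}u^{3}\|_{q}$. It then multiplies the $u^{\theta}$-equation (\ref{1.2})$_{2}$ by $(u^{\theta})^{3}$; the only dangerous term $-\int\frac{u^{r}}{r}(u^{\theta})^{4}dx=\int\partial_{3}(\frac{\psi}{r})(u^{\theta})^{4}dx$ is integrated by parts in $x_{3}$ and estimated via $\|\frac{\psi}{r^{1-d}}\|_{q}$, the weighted norm $\|\frac{(u^{\theta})^{2}}{r}\|_{2}$ and the dissipation $\|\tilde\nabla(u^{\theta})^{2}\|_{2}$, closing a Gronwall estimate for $\|u^{\theta}\|_{4}^{4}$ (with the $r\leq\beta$ versus $r\geq\beta$ splitting in case (2)). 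This yields $u^{\theta}\in L^{\infty,4}_{T^{*}}$, which is precisely a Prodi--Serrin condition covered by Theorem \ref{thm1.2} (take $d=0$, $p=\infty$, $q=4$, so $\frac{2}{p}+\frac{3}{q}=\frac34\leq1$), and regularity follows. In short, the hypothesis on $u^{3}$ must first be converted into control of $\psi/r$ and then into an a priori $L^{4}$ bound on $u^{\theta}$; only then does the $(\Phi,\Gamma)$ machinery enter, whereas feeding $u^{3}$ directly into the $(\Phi,\Gamma)$ estimate leaves uncontrollable trilinear terms.
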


Since $\psi$ is bounded if $r|u^{3}|\leq C$, we deduce following corollary from \cite{Z.Lei}.
\begin{cor}\label{cor1.6}
Let $\mathbf{u}$ be an axisymmetric  suitable weak solution of the Navier-Stokes equations (\ref{1.1}) with the axisymmetric initial data $\mathbf{u_{0}}\in L^{2}(\R^{3})$, $\mathrm{div}~\mathbf{u_{0}}=0$, and $ru_{0}^{\theta}\in L^{\infty}(\R^{3})$. Suppose $ru^{3}\in L^{\infty,\infty}_T$, then $\mathbf{u}$  is smooth in $(0,T]\times \R^{3}$.
\end{cor}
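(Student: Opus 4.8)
The plan is to reduce this corollary to the $BMO^{-1}$ regularity criterion of Lei and Zhang \cite{Z.Lei} by showing that the hypothesis $ru^{3}\in L^{\infty,\infty}_{T}$ forces the drift field $\mathbf{b}=u^{r}\mathbf{e_{r}}+u^{3}\mathbf{e_{3}}$ into $L^{\infty}_{T}(BMO^{-1})$. The bridge is the axisymmetric stream potential $\psi$ of \cite{JG.Liu1}: since $\mathbf{b}$ is divergence free with $\mathrm{curl}~\mathbf{b}=\omega^{\theta}\boldsymbol{e_{\theta}}$ by (\ref{1.4}), one may represent it as $\mathbf{b}=\mathrm{curl}(\psi\boldsymbol{e_{\theta}})$, which in the cylindrical frame reads $u^{r}=-\partial_{3}\psi$ and $u^{3}=\frac{1}{r}\partial_{r}(r\psi)$.

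First I would recover $\psi$ from $u^{3}$. Integrating the relation $\partial_{r}(r\psi)=ru^{3}$ in $r$, using the natural decay $r\psi\to0$ as $r\to0$ for the stream potential, gives the explicit representation
\begin{equation*}
\psi(t,r,x_{3})=\frac{1}{r}\int_{0}^{r}s\,u^{3}(t,s,x_{3})\,ds.
\end{equation*}
From here boundedness is immediate: since $|s\,u^{3}(t,s,x_{3})|\leq\|ru^{3}\|_{L^{\infty,\infty}_{T}}$ for a.e. $s$, we obtain
\begin{equation*}
|\psi(t,r,x_{3})|\leq\frac{1}{r}\int_{0}^{r}\|ru^{3}\|_{L^{\infty,\infty}_{T}}\,ds=\|ru^{3}\|_{L^{\infty,\infty}_{T}},
\end{equation*}
uniformly in $(t,r,x_{3})$, so that $\psi\boldsymbol{e_{\theta}}$ is a bounded Cartesian field with $\|\psi\boldsymbol{e_{\theta}}\|_{L^{\infty}_{T}(L^{\infty})}\leq\|ru^{3}\|_{L^{\infty,\infty}_{T}}$.

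Next I would promote this into a $BMO^{-1}$ bound. Because $L^{\infty}\hookrightarrow BMO$ and $\mathbf{b}=\mathrm{curl}(\psi\boldsymbol{e_{\theta}})$ is a first-order distributional derivative of the bounded field $\psi\boldsymbol{e_{\theta}}$, each component of $\mathbf{b}$ is the divergence of a $BMO$ field, whence $\mathbf{b}\in L^{\infty}_{T}(BMO^{-1})$ with $\|\mathbf{b}\|_{L^{\infty}_{T}(BMO^{-1})}\lesssim\|ru^{3}\|_{L^{\infty,\infty}_{T}}$. Together with the maximum principle for the swirl, which yields $\|ru^{\theta}\|_{L^{\infty,\infty}_{T}}\leq\|ru_{0}^{\theta}\|_{L^{\infty}}$ under the assumption $ru_{0}^{\theta}\in L^{\infty}$, the hypotheses of the Lei--Zhang criterion are satisfied, and I conclude that $\mathbf{u}$ is smooth in $(0,T]\times\R^{3}$.

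The main obstacle is the passage from the pointwise bound on $\psi$ to $\mathbf{b}\in BMO^{-1}$: one must justify that $\psi\boldsymbol{e_{\theta}}$ — which is only bounded and is generically discontinuous across the symmetry axis $\{r=0\}$, where $\boldsymbol{e_{\theta}}$ itself is singular — nonetheless defines a genuine $BMO$ Cartesian vector field whose distributional curl lands in $BMO^{-1}$. One also needs to verify that the stream potential is well defined for the suitable weak solution at hand and that the boundary behaviour $r\psi\to0$ used to fix the constant of integration is legitimate. Once these points are settled, the conclusion is a direct application of \cite{Z.Lei}.
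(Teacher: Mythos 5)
Your proposal is correct and follows essentially the same route as the paper: you recover $\psi$ from $ru^{3}=\partial_{r}(r\psi)$ exactly as in the paper's identity (\ref{3.17}) (your formula is its $d=1$ case), deduce $\psi\in L^{\infty,\infty}_{T}$, embed $\psi\boldsymbol{e_{\theta}}\in L^{\infty}\hookrightarrow BMO$ so that $\mathbf{b}=\nabla\times(\psi\boldsymbol{e_{\theta}})\in L^{\infty}_{T}(BMO^{-1})$, and invoke Theorem 1.4 of \cite{Z.Lei}. The "obstacle" you flag is not a genuine one, since $L^{\infty}\subset BMO$ needs no continuity across the axis, so your argument is complete as stated.
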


In Section \ref{S2}, we establish some important lemmas for the use of proof, like the explicit expression of $\frac{u^r}{r}$ by $\frac{\omega^\theta}{r}$ in  Lemma \ref{lem2.4},
a general Sobolev-Hardy inequality in  Lemma \ref{lem2.1}. Then we will prove the regularity criteria by $u^{\theta}$, $\omega^3$ and $u^{3}$, respectively, in Section \ref{S3}. At the end, we prove the global well-posedness result in Theorem \ref{thm1.4} in Section \ref{S4}.\\

\textbf{Notations.} We introduce the Banach space $L^{p,q}_{T}$, equipped with norm
$$
\|f\|_{L^{p,q}_{T}}=\left\{
\begin{array}{lcl}
\bigl(\int_{0}^{T}~\|f(t)\|^{p}_{q}~dt\bigr)^{\frac{1}{p}},&~ & \text{if}~1\leq p<\infty,\\
\text{ess sup}_{t\in(0,T)}\|f(t)\|_{q}, &~ & \text{if}~p=\infty,
\end{array}\right.
$$
where
$$
\|f(t)\|_{q}=\left\{\begin{array}{lcl}\bigl(\int_{\R^{3}}~|f(t,x)|^{q}~dx \bigr)^{\frac{1}{q}},&~ & \text{if}~1\leq q<\infty,\\
\text{ess sup}_{x\in\R^{3}}~|f(t,x)|, &~ & \text{if}~q=\infty.
\end{array}\right.
$$

\section{Preliminaries}\label{S2}
Now we begin with   the 1-D Hardy inequality in \cite{Hardy} (Theorem 330, page 245).
\begin{lem}\label{lem2.2}
If $q>1,\sigma\neq1$, and $f(r)$ is nonnegative measurable function, $F(r)$ is defined by
$$
F(r)=\int_{0}^{r}f(t)~dt~(\sigma>1),~~F(r)=\int_{r}^{\infty}f(t)~dt~(\sigma<1),
$$
then
$$
\int_{0}^{\infty}r^{-\sigma}F^{q}~dr<(\frac{q}{|\sigma-1|})^{q}\int_{0}^{\infty}r^{-\sigma}(rf)^{q}~dr
$$
\end{lem}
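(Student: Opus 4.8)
The plan is to treat the two cases $\sigma>1$ and $\sigma<1$ in parallel by an integration by parts followed by Hölder's inequality, exploiting the relation between $F$ and $f$ (namely $F'=f$ when $\sigma>1$ and $F'=-f$ when $\sigma<1$). First I would dispose of the trivial situation: writing $J:=\int_0^\infty r^{-\sigma}(rf)^q\,dr$, if $J=\infty$ there is nothing to prove, so I may assume $J<\infty$. To avoid \emph{a priori} integrability questions at the endpoints $r=0$ and $r=\infty$, I would first establish the inequality for the truncated functions $f_n=f\,\mathbf{1}_{[1/n,n]}$, for which every integral in sight is finite, and then recover the general case by monotone convergence as $n\to\infty$.

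For the core estimate in the case $\sigma>1$ (so $F(r)=\int_0^r f$), I would integrate $\int_0^\infty r^{-\sigma}F^q\,dr$ by parts, taking the antiderivative $-\tfrac{1}{\sigma-1}r^{1-\sigma}$ of $r^{-\sigma}$. The boundary contributions $-\tfrac{1}{\sigma-1}r^{1-\sigma}F^q$ vanish at both ends (at $r=0$ because $F\to0$ fast enough, at $r=\infty$ because $r^{1-\sigma}\to0$ while $F$ stays bounded for the truncated problem), leaving
\[
\int_0^\infty r^{-\sigma}F^q\,dr=\frac{q}{\sigma-1}\int_0^\infty r^{1-\sigma}F^{q-1}f\,dr.
\]
The case $\sigma<1$ with $F(r)=\int_r^\infty f$ is identical up to a sign, and produces the same formula with $\sigma-1$ replaced by $1-\sigma=|\sigma-1|$.

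Next I would split the integrand on the right as $r^{1-\sigma}F^{q-1}f=\bigl(r^{-\sigma/q'}F^{q-1}\bigr)\bigl(r^{1-\sigma+\sigma/q'}f\bigr)$ with $q'=q/(q-1)$, chosen precisely so that the first factor raised to the power $q'$ gives $r^{-\sigma}F^q$ and the second raised to the power $q$ gives $r^{q-\sigma}f^q=r^{-\sigma}(rf)^q$. Hölder's inequality then yields
\[
\int_0^\infty r^{1-\sigma}F^{q-1}f\,dr\le\Bigl(\int_0^\infty r^{-\sigma}F^q\,dr\Bigr)^{1/q'}\Bigl(\int_0^\infty r^{-\sigma}(rf)^q\,dr\Bigr)^{1/q}.
\]
Writing $I$ for the left-hand quantity of the lemma, the last two displays combine to $I\le\frac{q}{|\sigma-1|}\,I^{1/q'}J^{1/q}$; since $0<I<\infty$ for the truncated problem, I can divide by $I^{1/q'}$ and raise to the power $q$ to obtain $I\le(q/|\sigma-1|)^qJ$, which then passes to the limit in $n$.

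The main obstacle, and the only genuinely delicate point, is the rigorous vanishing of the boundary terms together with the legitimacy of dividing by $I^{1/q'}$; both are exactly what force the truncation-and-limit scheme rather than a purely formal computation. The \emph{strict} inequality is then recovered by examining the equality case in the Hölder step: equality would require $r^{-\sigma}F^q$ proportional to $r^{-\sigma}(rf)^q$ almost everywhere, i.e. $F(r)=c\,rf(r)$, which together with $F'=\pm f$ forces $f$ to be (up to constants) a pure power of $r$; such a power cannot lie in the relevant weighted $L^q$ space unless $f\equiv0$, so equality is unattainable for nontrivial $f$.
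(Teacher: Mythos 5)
You should first be aware that the paper offers no proof of this lemma: it is quoted directly from Hardy--Littlewood--P\'olya \cite{Hardy} (Theorem 330), so the only meaningful comparison is with the classical argument behind that theorem. Your proof of the \emph{non-strict} inequality is exactly that classical argument and is correct: with $I=\int_0^\infty r^{-\sigma}F^q\,dr$, $J=\int_0^\infty r^{-\sigma}(rf)^q\,dr$ and $C=(q/|\sigma-1|)^q$, you may assume $J<\infty$, truncate to $f_n=f\,\mathbf{1}_{[1/n,n]}$ (note that $J<\infty$ is what guarantees $\int_{1/n}^n f\,dt<\infty$, hence boundedness of $F_n$), integrate by parts with genuinely vanishing boundary terms, apply H\"older with the exponent pair $(q,q')$ --- your splitting of the integrand is the right one --- divide by $I_n^{1/q'}$ when $I_n>0$ (the case $I_n=0$ being trivial), and let $n\to\infty$ by monotone convergence. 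All of this is sound.

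The gap is in the final, strictness step. The H\"older step whose equality case you propose to examine was applied only to the truncated functions $f_n$, and strict inequalities $I_n<CJ_n$ do not survive the monotone limit: they yield only $I\le CJ$. So the equality analysis must be performed on the untruncated $f$, which requires exactly what your truncation was introduced to avoid, namely the identity $I=\frac{q}{|\sigma-1|}\int_0^\infty r^{1-\sigma}F^{q-1}f\,dr$ for general $f$ with $0<J<\infty$. This is where a real (though fixable) argument is missing: one must check that the boundary terms vanish for $f$ itself, which follows from the monotonicity of $F$ together with the finiteness of $I$ (already known from the non-strict inequality); for instance, for $\sigma>1$ one has $F(R)^qR^{1-\sigma}\le C_\sigma\int_R^{2R}r^{-\sigma}F^q\,dr\to0$ as $R\to\infty$, and similarly as $r\to0^+$ and in the case $\sigma<1$. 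Once that identity is in place, assuming $I=CJ$ does force equality in H\"older, hence $F=c\,rf$ a.e., and your concluding ODE argument (which also deserves a line: $F$ is monotone and absolutely continuous, so $F=c\,rF'$ integrates to a pure power of $r$) gives $J=\infty$, a contradiction. Finally, note that the statement as printed is false for $f\equiv0$ (both sides vanish); the strict inequality should carry the proviso ``unless $f\equiv0$'', as in the original theorem --- your last sentence implicitly acknowledges this, but the repair above is what actually proves it.
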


We will give some useful estimates in the axisymmetric Navier-Stokes equations, and refer to \cite{Pokorny1,JG.Liu1} for details. Note $\tilde\nabla=(\partial_{r},\partial_{3})$, and
$$
C_{s}(\overline{\R^{+}}\times\R)=\{f(r,z)\in C^{\infty}(\overline{\R^{+}}\times\R),~\partial_{r}^{2j}f(0^{+},z)=0,~j\geq0,j\in\mathbb{N}  \}.$$
\begin{lem}\label{lem2.3}
Assume $\mathbf{u}$ is the smooth axisymmetric solution of (\ref{1.1}) on $[0,T]$, with the initial data $\mathbf{u_{0}}$, and $\mathrm{curl} ~\mathbf{u}= \boldsymbol{\omega} $, then
\begin{enumerate}
\item[\textrm{i})]  $\mathbf{b}$, $\frac{u^{r}}{r}$, $\frac{u^{\theta}}{r}$ and $\frac{\omega^{\theta}}{r}$ are smooth with variables $(t,x)\in (0,T)\times \R^{3}$,

\item[\textrm{ii})] $\mathbf{u}=u^{\theta}\boldsymbol{e_{\theta}}+\nabla\times(\psi \boldsymbol{e_{\theta}})=-\partial_{3}\psi\boldsymbol{e_{r}}+u^{\theta}\boldsymbol{e_{\theta}}+\frac{\partial_{r}(r\psi)}{r}\boldsymbol{e_{3}}$, with
\begin{equation}
u^{\theta}(t,r,x_{3}), \ \
\psi(t,r,x_{3}), \ \
\omega^{\theta}(t,r,x_{3})\in C^{1}(0,T; C_{s}(\overline{\R^{+}}\times\R)),
\end{equation}

\item[\textrm{iii})] there exists a  constant $C=C(q)$, such that for $1<q<\infty$,
$$
\|\tilde\nabla u^{r}\|_{q}+\|\tilde\nabla u^{3}\|_{q}+\|\frac{u^{r}}{r}\|_{q}\leq C \|\omega^{\theta}\|_{q},
$$
    $$
\|\tilde\nabla u^{\theta}\|_{q}+\|\frac{u^{\theta}}{r}\|_{q}\leq C\|\nabla \mathbf{u}\|_{q} ,
$$
    $$
\|\tilde\nabla(\frac{u^{r}}{r})\|_{q}+\|\tilde\nabla(\frac{u^{\theta}}{r})\|_{q}  +
\|\tilde\nabla \omega^{r}\|_{q}+\|\tilde\nabla \omega^{3}\|_{q}+\|\frac{\omega^{r}}{r}\|_{q}+\|\tilde\nabla \omega^{\theta}\|_{q}+\|\frac{\omega^{\theta}}{r}\|_{q} \leq C \|\nabla^{2} \mathbf{u}\|_{q},
$$
    $$
\|\tilde\nabla(\frac{\omega^{\theta}}{r})\|_{q}   \leq C\|\nabla^{3} \mathbf{u}\|_{q},
$$

\item[\textrm{iv})] if in addition, $ru_{0}^{\theta}\in L^{\infty}(\R^{3})$, then also $ru^{\theta}\in L^{\infty,\infty}_{T}.$
\end{enumerate}
\end{lem}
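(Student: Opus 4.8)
The plan is to treat the four parts in the order ii) $\to$ i) $\to$ iii) $\to$ iv), since the stream-function decomposition in ii) is the structural input for everything else. First I would introduce the scalar stream function $\psi$ for the swirl-free field $\mathbf{b}=u^{r}\mathbf{e_{r}}+u^{3}\mathbf{e_{3}}$. Because $\mathrm{div}\,\mathbf{b}=0$ and $\mathrm{curl}\,\mathbf{b}=\omega^{\theta}\boldsymbol{e_{\theta}}$ by (\ref{1.4}), there is a unique axisymmetric $\psi$ with $\mathbf{b}=\nabla\times(\psi\boldsymbol{e_{\theta}})$; carrying out the curl in cylindrical coordinates yields exactly $u^{r}=-\partial_{3}\psi$ and $u^{3}=\frac{\partial_{r}(r\psi)}{r}$, which is the identity claimed in ii). Applying $\mathrm{curl}$ once more and using $\mathrm{curl}\,\mathbf{b}=\omega^{\theta}\boldsymbol{e_{\theta}}$ turns $\psi$ into the solution of the scalar elliptic problem
$$-\bigl(\partial_{r}^{2}+\partial_{3}^{2}+\tfrac{1}{r}\partial_{r}-\tfrac{1}{r^{2}}\bigr)\psi=\omega^{\theta}.$$
The axis conditions defining $C_{s}(\overline{\R^{+}}\times\R)$ (vanishing of even-order $r$-derivatives at $r=0^{+}$) are precisely the compatibility conditions making $\psi\boldsymbol{e_{\theta}}$, $u^{\theta}\boldsymbol{e_{\theta}}$ and $\omega^{\theta}\boldsymbol{e_{\theta}}$ smooth vector fields on $\R^{3}$; standard interior elliptic regularity for the displayed equation, together with the assumed smoothness of $\mathbf{u}$, then gives the regularity classes in ii) and, in turn, the smoothness of $\mathbf{b}$, $\frac{u^{r}}{r}=-\frac{\partial_{3}\psi}{r}$, $\frac{u^{\theta}}{r}$ and $\frac{\omega^{\theta}}{r}$ asserted in i).

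For iii) the guiding principle is that each listed quantity is obtained from a vorticity component by a Biot--Savart/Calder\'on--Zygmund operator, bounded on $L^{q}$ for $1<q<\infty$, and that every factor $1/r$ is absorbed by the one-dimensional Hardy inequality of Lemma \ref{lem2.2}. Concretely, differentiating $u^{r}=-\partial_{3}\psi$ and $u^{3}=\partial_{r}\psi+\frac{\psi}{r}$ and invoking the elliptic estimate $\|\tilde\nabla^{2}\psi\|_{q}\leq C\|\omega^{\theta}\|_{q}$ gives $\|\tilde\nabla u^{r}\|_{q}+\|\tilde\nabla u^{3}\|_{q}\leq C\|\omega^{\theta}\|_{q}$; for the zeroth-order piece I write $\frac{u^{r}}{r}=-\frac{\partial_{3}\psi}{r}$ and apply Hardy (with $\sigma$ chosen according to $q$) in the radial variable to bound it by $\|\partial_{r}\partial_{3}\psi\|_{q}\leq C\|\omega^{\theta}\|_{q}$. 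The second line follows the same template with the full $\boldsymbol{\omega}$ in place of $\omega^{\theta}$: since $\mathbf{u}=\nabla\times(-\Delta)^{-1}\boldsymbol{\omega}$, the map $\boldsymbol{\omega}\mapsto\nabla\mathbf{u}$ is a matrix of Riesz transforms, so $\|\tilde\nabla u^{\theta}\|_{q}\leq C\|\nabla\mathbf{u}\|_{q}$, and $\frac{u^{\theta}}{r}$ is again controlled by Hardy from $\partial_{r}u^{\theta}$. The third and fourth groups are the identical statements shifted up by one and two derivatives: one applies the $L^{q}$ singular-integral bounds to $\nabla\boldsymbol{\omega}$ (resp. $\nabla^{2}\boldsymbol{\omega}$), controlled by $\|\nabla^{2}\mathbf{u}\|_{q}$ (resp. $\|\nabla^{3}\mathbf{u}\|_{q}$), and uses Hardy once more for each weighted term $\frac{\omega^{r}}{r}$, $\frac{\omega^{\theta}}{r}$ and $\tilde\nabla(\frac{\omega^{\theta}}{r})$.

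Finally, part iv) is a maximum-principle statement for $\Theta=ru^{\theta}$. By the first equation of the $(\Theta,\Gamma)$-system, $\Theta$ solves $(\partial_{t}+\mathbf{b}\cdot\nabla-\Delta+\frac{2}{r}\partial_{r})\Theta=0$; the operator $-\Delta+\frac{2}{r}\partial_{r}$ is, under the axisymmetric reduction, the Laplacian acting on radial-type functions in $\R^{5}$, so it obeys the maximum principle, and the drift $\mathbf{b}\cdot\nabla$ with $\mathrm{div}\,\mathbf{b}=0$ does not spoil it. Hence $\|\Theta(t)\|_{\infty}\leq\|\Theta(0)\|_{\infty}=\|ru_{0}^{\theta}\|_{\infty}$ for every $t\in[0,T]$, which is exactly $ru^{\theta}\in L^{\infty,\infty}_{T}$. \textbf{The main obstacle} throughout is not any single inequality but the bookkeeping of the singular weight $1/r$ near the axis: at each step one must check that the $C_{s}$ axis conditions guarantee the hypotheses of Hardy's inequality (the correct exponent $\sigma\neq1$ relative to $q$, and the finiteness of the weighted integrals) so that the weighted norms are genuinely finite and dominated by the unweighted derivative norms. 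This is where the careful work in the references \cite{Pokorny1,JG.Liu1} is used.
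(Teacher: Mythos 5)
First, a point of reference: the paper does not actually prove Lemma \ref{lem2.3}; it states it and refers to \cite{Pokorny1,JG.Liu1} for the details, so the benchmark is the argument in those references. Your outline (stream function for $\mathbf{b}$, Calder\'on--Zygmund/Biot--Savart bounds, Hardy's inequality for the $1/r$ weights, maximum principle for $\Theta=ru^{\theta}$) has the same architecture, and parts i)--ii) are fine. But two steps, as you wrote them, would fail. The Hardy step: in the three-dimensional norm, $\|\frac{g}{r}\|_{q}^{q}=2\pi\int\!\!\int |g|^{q}r^{1-q}\,dr\,dx_{3}$, so Lemma \ref{lem2.2} must be applied with $\sigma=q-1$, and its hypothesis $\sigma>1$ (needed when $F=\int_{0}^{r}f$) forces $q>2$. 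Thus "with $\sigma$ chosen according to $q$" does not cover $1<q\leq 2$, while the lemma claims all $1<q<\infty$. The repair is either to apply Hardy to $F=rg$ instead of $g$ — e.g.\ $F=ru^{\theta}$, $f=\frac{d}{dr}(ru^{\theta})=r\omega^{3}$, $\sigma=2q-1>1$ for every $q>1$, which is exactly how the paper itself uses Lemma \ref{lem2.2} in the proof of Theorem \ref{thm1.1} — or to bypass Hardy via the pointwise cylindrical-frame identities $|\nabla\mathbf{b}|^{2}=|\tilde\nabla u^{r}|^{2}+|\tilde\nabla u^{3}|^{2}+|\frac{u^{r}}{r}|^{2}$ and $|\nabla(u^{\theta}\boldsymbol{e_{\theta}})|^{2}=|\tilde\nabla u^{\theta}|^{2}+|\frac{u^{\theta}}{r}|^{2}$, which give the first two displays of iii) for the full range with a single Calder\'on--Zygmund bound $\|\nabla\mathbf{b}\|_{q}\leq C\|\omega^{\theta}\|_{q}$. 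The same issue bites harder on the terms you gloss over as "Hardy once more": $\partial_{r}\frac{u^{\theta}}{r}=\frac{\partial_{r}u^{\theta}}{r}-\frac{u^{\theta}}{r^{2}}$ contains the doubly weighted $\frac{u^{\theta}}{r^{2}}$, and iterating Hardy does not close; these terms are precisely where the second-order pointwise identities of \cite{JG.Liu1} are needed.

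Second, your justification of iv) misidentifies the operator. The equation is $(\partial_{t}+\mathbf{b}\cdot\nabla-\Delta+\frac{2}{r}\partial_{r})\Theta=0$, so the elliptic part acting on $\Theta$ is $\Delta-\frac{2}{r}\partial_{r}=\partial_{r}^{2}-\frac{1}{r}\partial_{r}+\partial_{3}^{2}$, which is \emph{not} the five-dimensional Laplacian; the $\R^{5}$ structure $\Delta+\frac{2}{r}\partial_{r}=\partial_{r}^{2}+\frac{3}{r}\partial_{r}+\partial_{3}^{2}$ belongs to the $\Gamma$ and $\Phi$ equations (\ref{1.6}), not to $\Theta$. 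The maximum principle for $\Theta$ still holds, but for the elementary reason: at an interior extremum with $r>0$ one has $\tilde\nabla\Theta=0$, so both the convection term and the singular drift $-\frac{2}{r}\partial_{r}\Theta$ vanish there while the second-order part has a sign, and on the axis $\Theta=ru^{\theta}=0$; together with decay at infinity this gives $\|ru^{\theta}(t)\|_{\infty}\leq\|ru_{0}^{\theta}\|_{\infty}$. So the conclusion of iv) is correct, but the reason you give should be replaced by this direct argument.
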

Furthermore, we extend the following argument, which is widely used in various equations with free swirl (See \cite{Hammadi,CX.Miao}), to the axisymmetric solutions with nonzero swirl.
\begin{lem} \label{lem2.4}
Assume $\mathbf{u}$ is the smooth axisymmetric solution of (\ref{1.1}), and $\mathrm{curl} ~\mathbf{u}= \boldsymbol{\omega} $, then
\begin{equation}
\frac{u^{r}}{r}=\Delta^{-1}\partial_{3}(\frac{\omega^{\theta}}{r})-2\frac{\partial_{r}}{r}\Delta^{-2}\partial_{3}(\frac{\omega^{\theta}}{r}), \\
\end{equation}
here we have
\begin{equation}
\frac{\partial_{r}}{r}\Delta^{-1}W(r,x_{3})=\frac{x_{2}^{2}}{r^{2}}\mathcal{R}_{11}W+\frac{x_{1}^{2}}{r^{2}}\mathcal{R}_{22}W-2\frac{x_{1}x_{2}}{r^{2}}\mathcal{R}_{12}W
\end{equation}
with $\mathcal{R}_{ij}=\Delta^{-1}\partial_{x_{i}}\partial_{x_{j}}$.
Besides, we have, for $1<q<\infty$,
\begin{equation} \label{2.6}
\| \tilde\nabla\frac{u^{r}}{r}\|_{q}\leq C(q)~\|\frac{\omega^{\theta}}{r}\|_{q},
\end{equation}
\begin{equation} \label{2.6-1}
\|\tilde\nabla\tilde\nabla \frac{u^{r}}{r}\|_{q}\leq C(q)~\|\partial_{3} (\frac{\omega^{\theta}}{r})\|_{q}.
\end{equation}
\end{lem}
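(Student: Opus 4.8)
The plan is to reduce the statement to an elliptic identity for $\frac{u^{r}}{r}$ and then read off the explicit formula and the $L^q$ bounds from Calder\'on--Zygmund theory. First I would use the stream-function representation of Lemma~\ref{lem2.3}(ii), namely $u^{r}=-\partial_{3}\psi$ together with $\mathbf{b}=\nabla\times(\psi\boldsymbol{e_{\theta}})$. Since $\mathrm{curl}\,\mathbf{b}=\omega^{\theta}\boldsymbol{e_{\theta}}$ by (\ref{1.4}) and $\nabla\cdot(\psi\boldsymbol{e_{\theta}})=0$ for axisymmetric $\psi$, the vector identity $\nabla\times\nabla\times=\nabla\,\mathrm{div}-\Delta$ gives $-L\psi=\omega^{\theta}$, where $L=\partial_{r}^{2}+\partial_{3}^{2}+\frac{1}{r}\partial_{r}-\frac{1}{r^{2}}$ is the operator appearing in (\ref{1.2}). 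The second ingredient is the elementary operator identity, valid for any smooth axisymmetric $f$,
\[
\Bigl(\Delta+\tfrac{2}{r}\partial_{r}\Bigr)\frac{f}{r}=\frac{Lf}{r},
\]
which is a direct computation (expand $\partial_{r}^{2}$ and $\frac{1}{r}\partial_{r}$ of $f/r$). Applying it with $f=u^{r}=-\partial_{3}\psi$ and using $Lu^{r}=-\partial_{3}L\psi=\partial_{3}\omega^{\theta}$, together with $\frac{\partial_{3}\omega^{\theta}}{r}=\partial_{3}\frac{\omega^{\theta}}{r}$, yields the governing elliptic equation
\[
\Bigl(\Delta+\tfrac{2}{r}\partial_{r}\Bigr)\frac{u^{r}}{r}=\partial_{3}W,\qquad W:=\frac{\omega^{\theta}}{r}.
\]

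Next I would verify that the claimed expression solves this equation. Writing $\frac{\partial_{r}}{r}$ for the operator $\frac{1}{r}\partial_{r}$, the key tool is the commutator identity, again obtained by direct differentiation of an axisymmetric function $h$,
\[
\Delta\Bigl(\tfrac{1}{r}\partial_{r}h\Bigr)=\tfrac{1}{r}\partial_{r}\Delta h-\tfrac{2}{r}\partial_{r}\Bigl(\tfrac{1}{r}\partial_{r}h\Bigr).
\]
Setting $h=\Delta^{-2}\partial_{3}W$ and $A=\frac{1}{r}\partial_{r}h$, so that the candidate is $V_{\ast}=\Delta^{-1}\partial_{3}W-2A=\Delta h-2A$, a short computation using this commutator gives $(\Delta+\frac{2}{r}\partial_{r})(\Delta h)=\partial_{3}W+\frac{2}{r}\partial_{r}\Delta^{-1}\partial_{3}W$ and $(\Delta+\frac{2}{r}\partial_{r})(-2A)=-\frac{2}{r}\partial_{r}\Delta^{-1}\partial_{3}W$, whose sum is exactly $\partial_{3}W$. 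Hence $V_{\ast}$ and $\frac{u^{r}}{r}$ solve the same equation; since both are smooth and decay at infinity and the operator $\Delta+\frac{2}{r}\partial_{r}$ (equivalently $L$ acting on $rV$) has trivial kernel among decaying axisymmetric functions, uniqueness forces $\frac{u^{r}}{r}=V_{\ast}$, which is the asserted identity. The Riesz representation of $\frac{\partial_{r}}{r}\Delta^{-1}$ follows independently: for axisymmetric $g$ one computes $\partial_{i}\partial_{j}g$ in terms of $g_{r},g_{rr}$ and checks that $\frac{x_{2}^{2}}{r^{2}}\partial_{1}^{2}g+\frac{x_{1}^{2}}{r^{2}}\partial_{2}^{2}g-2\frac{x_{1}x_{2}}{r^{2}}\partial_{1}\partial_{2}g=\frac{g_{r}}{r}$ (the $g_{rr}$ terms cancel and the $g_{r}$ terms combine to $\frac{(x_{1}^{2}+x_{2}^{2})^{2}}{r^{5}}g_{r}$); taking $g=\Delta^{-1}W$ and $\mathcal{R}_{ij}=\Delta^{-1}\partial_{i}\partial_{j}$ gives the stated formula.

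For the estimates (\ref{2.6}) and (\ref{2.6-1}) I would differentiate the explicit representation and identify every resulting term as a composition of Riesz transforms with bounded coefficients. The point is that $\partial_{3}$ commutes with $\Delta^{-1}$, $\frac{1}{r}$ and $\partial_{r}$, while $\partial_{r}=\frac{x_{1}}{r}\partial_{1}+\frac{x_{2}}{r}\partial_{2}$; since the only non-Riesz factors that appear are the homogeneous degree-zero functions $\frac{x_{i}x_{j}}{r^{2}}$ and $\frac{x_{i}}{r}$, and the radial derivative annihilates any degree-zero (purely angular) function, the terms in which $\partial_{r}$ falls on a coefficient vanish. Consequently each component of $\tilde\nabla\frac{u^{r}}{r}$ is a finite sum of the form $m(x)\,\mathcal{R}_{\alpha}\mathcal{R}_{\beta}W$ with $\|m\|_{\infty}\le1$, and the $L^{q}$-boundedness of the Riesz transforms for $1<q<\infty$ yields $\|\tilde\nabla\frac{u^{r}}{r}\|_{q}\le C\|W\|_{q}$, i.e.\ (\ref{2.6}). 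For (\ref{2.6-1}) I factor one $\partial_{3}$ out of $W$ and run the same argument with input $\partial_{3}W$: since each term of the representation carries a $\Delta^{-1}$ or $\Delta^{-2}$, two derivatives $\tilde\nabla\tilde\nabla$ produce again a zero-order singular integral acting on $\partial_{3}W$, giving $\|\tilde\nabla\tilde\nabla\frac{u^{r}}{r}\|_{q}\le C\|\partial_{3}W\|_{q}$.

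I expect the main obstacle to be the rigorous derivation and inversion of the elliptic equation: establishing the commutator identity and, above all, justifying uniqueness for $\Delta+\frac{2}{r}\partial_{r}$ on the relevant class of decaying axisymmetric functions, where the behaviour at the symmetry axis $r=0$ must be handled with care (for instance, the apparently singular $\frac{1}{r}$-contributions in $\partial_{r}\frac{u^{r}}{r}$ cancel only because smooth axisymmetric functions are even in $r$, so $h_{r}=O(r)$ near the axis). The $L^{q}$ step is then standard once the homogeneity cancellation is observed.
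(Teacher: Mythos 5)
Your proposal is correct in substance, but it reaches the representation formula by a genuinely different route than the paper. The paper's proof is purely constructive: from $\mathrm{div}\,\mathbf{b}=0$ and $\mathrm{curl}\,\mathbf{b}=\omega^{\theta}\boldsymbol{e_{\theta}}$ it writes $\Delta\mathbf{b}=-\mathrm{curl}(\omega^{\theta}\boldsymbol{e_{\theta}})$, inverts the Laplacian on the \emph{Cartesian} components to get $u^{r}=\sum_{i=1}^{2}\frac{x_{i}}{r}\partial_{3}\Delta^{-1}(x_{i}\Gamma)$, and then evaluates the commutator $[x_{i},\Delta^{-1}]\Gamma=2\partial_{i}\Delta^{-2}\Gamma$ (read off from $\Delta[x_{i},\Delta^{-1}]\Gamma=2\partial_{i}\Delta^{-1}\Gamma$), so the identity for $\frac{u^{r}}{r}$ drops out directly with no separate uniqueness discussion. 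You instead derive the scalar elliptic equation $\bigl(\Delta+\frac{2}{r}\partial_{r}\bigr)\frac{u^{r}}{r}=\partial_{3}\frac{\omega^{\theta}}{r}$ via the stream function and the identity $\bigl(\Delta+\frac{2}{r}\partial_{r}\bigr)\frac{f}{r}=\frac{Lf}{r}$, verify by your (correct) commutator computation that the claimed expression solves the same equation, and close the argument with a Liouville-type uniqueness statement for $\Delta+\frac{2}{r}\partial_{r}$. Your two key identities check out, and the verification that the candidate solves the equation is exactly right. What the comparison buys: your route exposes the underlying elliptic structure ($\Delta+\frac{2}{r}\partial_{r}$ is the five-dimensional Laplacian acting on axisymmetric functions, the same operator appearing in (\ref{1.6})), which is conceptually clean; but it shifts the analytic burden onto the uniqueness step — decay at infinity of both $\frac{u^{r}}{r}$ and the candidate (note $\Delta^{-2}$ in $\R^{3}$ has a growing kernel, so "decay" must be interpreted in an $L^{q}$ or averaged sense), plus smoothness across the axis via evenness in $r$ — precisely the point you flag yourself. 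The paper's commutator route avoids an explicit kernel argument, although it too implicitly inverts $\Delta$ on decaying functions, so the gap in rigor between the two is smaller than it may appear. For the Riesz representation of $\frac{\partial_{r}}{r}\Delta^{-1}$ and the $L^{q}$ bounds, both arguments coincide (polar-coordinate identity plus Calder\'on--Zygmund boundedness); in fact your treatment of the estimates is more explicit than the paper's one-line conclusion, since you spell out why $\tilde\nabla$ falling on the homogeneous degree-zero coefficients $\frac{x_{i}x_{j}}{r^{2}}$, $\frac{x_{i}}{r}$ produces no singular terms — this is the observation that makes "easily obtain (\ref{2.6})--(\ref{2.6-1})" legitimate.
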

\begin{proof} Set $\Gamma=\frac{\omega^{\theta}}{r}$. From (\ref{1.4}), we have
\begin{equation*}
\Delta~\mathbf{b}=-\mathrm{curl}( \omega^{\theta}\boldsymbol{e_{\theta}})
                =\left(\partial_{3}(\omega^{\theta}\frac{x_{1}}{r}),\partial_{3}(\omega^{\theta}\frac{x_{2}}{r}),-\partial_{1}(\omega^{\theta}
                \frac{x_{1}}{r})-\partial_{2}(\omega^{\theta}\frac{x_{2}}{r})\right),
\end{equation*}
Since $\mathbf{b}=u^r(\frac{x_1}{r},\frac{x_2}{r},0)$, we have
    $$
  u^r=\sum_{i=1}^2\frac{x_i}{r}\partial_3\Delta^{-1}(x_i\Gamma)
  = r \partial_3\Delta^{-1} \Gamma-\sum_{i=1}^2\frac{x_i}{r}\partial_3[x_i,\Delta^{-1}]\Gamma.
    $$
Since
\begin{eqnarray*}
\Delta\left[x_{i},\Delta^{-1}\right]\Gamma    = 2\partial_{i}\Delta^{-1}\Gamma,
\end{eqnarray*}
then we get
$$
\left[x_{i},\Delta^{-1}\right]\Gamma= 2\partial_{i}\Delta^{-2}\Gamma= 2x_{i}\frac{\partial_{r}}{r}\Delta^{-2}\Gamma,$$
and
$$
\frac{u^{r}}{r}
               = \Delta^{-1}\partial_{3}\Gamma-2\frac{\partial_{r}}{r}\Delta^{-2}\partial_{3}\Gamma.
$$
In addition, using the polar coordinates $x_{1}=r\cos \theta,x_{2}=r\sin \theta $, we obtain
\begin{eqnarray*}
&&\frac{\partial_{r}}{r}\Delta^{-1}W(r,x_{3})= (\Delta-\partial_{r}^{2}-\partial_{3}^{2})\Delta^{-1}W(r,x_{3})\\
                                       &=& \sin^{2}\theta(\mathcal{R}_{11}W)(x)+\cos^{2}\theta(\mathcal{R}_{22}W)(x)
                                       -2\sin\theta\cos\theta(\mathcal{R}_{12}W)(x).
\end{eqnarray*}
Using the $L^{q}$-boundedness of Riesz operator, we easily obtain (\ref{2.6})-(\ref{2.6-1}).
\end{proof}

Then, we give a general Sobolev-Hardy inequality.  Badiale and Tarantello proved the case $q_*=\frac{q(N-s)}{N-q}$ in \cite{Marino} (Theorem 2.1). For the
convenience of reader's reading, we give the proof by another method.
\begin{lem}\label{lem2.1}
Set $\R^{N}=\R^{k}\times\R^{N-k}$ with $2\leq k\leq N$, and write $ x=(x',z)\in\R^{k}\times\R^{N-k} $. For a given real number $q$, $s$, such that $1<q<N,0\leq s\leq q$, and $s<k$, set $q_{*}\in[q,\frac{q(N-s)}{N-q}]$.  Then there exists a positive constant $C=C(s,q,N,k)$, such that for all $f\in C_{0}^{\infty}(\R^{N})$,
    \begin{equation}\label{2.7}
 \int_{\R^{N}} \frac{|f|^{q_{*}}}{|x'|^{s}}~dx \leq C \|f\|_q^{\frac{N-s}{q_*}-\frac{N}{q}+1}\|\nabla f\|_q^{\frac{N}{q}-\frac{N-s}{q_*}}.
    \end{equation}
In particular, we pick $N=3$, $k=2$, $q=2$, $q_{*}\in[2,2(3-s)]$, and assume $0\leq s<2$, $r=\sqrt{x_{1}^{2}+x_{2}^{2}}$. Then there exists a positive constant $C(s)$, such that for all $f\in C_{0}^{\infty}(\R^{3})$,
\begin{equation}\label{2.7-0}
 \left\|\frac{f}{r^{\frac{s}{q_{*}}}}\right\|_{q_{*}} \leq C_{q_*,s}\|f\|_2^{\frac{3-s}{q_*}-\frac{1}{2}}\|\nabla f\|_2^{\frac{3}{2}-\frac{3-s}{q_*}}.
 \end{equation}
\end{lem}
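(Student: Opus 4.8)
The plan is to reduce the scale-invariant estimate (\ref{2.7}) to a non-critical weighted Sobolev embedding and then recover the product form by optimizing over dilations. By density it suffices to treat $f\in C_{0}^{\infty}(\R^{N})$; write $x=(x',z)$ and $\rho=|x'|$, and observe that $\rho^{-s}$ is locally integrable near $\{x'=0\}$ precisely because $s<k$. The exponents in (\ref{2.7}) satisfy $a+b=1$ with $a=\frac{N-s}{q_{*}}-\frac{N}{q}+1$ and $b=\frac{N}{q}-\frac{N-s}{q_{*}}$, and both sides transform consistently under the dilation $f\mapsto f(\lambda\,\cdot)$. Hence, if one first proves the additive bound, then applying it to $f(\lambda\,\cdot)$ gives $\|f/\rho^{s/q_{*}}\|_{q_{*}}\le C(\lambda^{-b}\|f\|_{q}+\lambda^{a}\|\nabla f\|_{q})$, and minimizing over $\lambda>0$ (weighted AM--GM, using $a,b\ge0$) reproduces exactly the powers $a,b$ in (\ref{2.7}). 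So it is enough to establish the additive embedding
\[
\Big\|\tfrac{f}{\rho^{s/q_{*}}}\Big\|_{q_{*}}\le C\big(\|f\|_{q}+\|\nabla f\|_{q}\big),\qquad q\le q_{*}\le \tfrac{q(N-s)}{N-q}.
\]

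To prove this embedding I would interpolate, by H\"{o}lder's inequality in the \emph{fixed} measure $\rho^{-s}\,dx$, between the two endpoints $q_{*}=q$ and $q_{*}=q_{*}^{\max}:=\frac{q(N-s)}{N-q}$: writing $\frac{1}{q_{*}}=\frac{1-\theta}{q}+\frac{\theta}{q_{*}^{\max}}$ with $\theta\in[0,1]$, log-convexity of $L^{p}(\rho^{-s}\,dx)$ norms yields $\|f\|_{L^{q_{*}}(\rho^{-s})}\le\|f\|_{L^{q}(\rho^{-s})}^{1-\theta}\|f\|_{L^{q_{*}^{\max}}(\rho^{-s})}^{\theta}$, so only the two endpoints are needed. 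The top endpoint is exactly the Hardy--Sobolev inequality $\|f/\rho^{s/q_{*}^{\max}}\|_{q_{*}^{\max}}\le C\|\nabla f\|_{q}$, which is the content of \cite{Marino}; alternatively it can be reproved self-containedly by symmetrizing $f$ in the $x'$-variables and combining the $1$-D Hardy inequality (Lemma \ref{lem2.2}) in the radial direction with the sharp Sobolev inequality on $\R^{N}$.

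For the bottom endpoint I would show $\int_{\R^{N}}\rho^{-s}|f|^{q}\,dx\le C(\|f\|_{q}^{q}+\|\nabla f\|_{q}^{q})$ by splitting $\R^{N}$ into $\{\rho\ge1\}$ and $\{\rho\le1\}$. On $\{\rho\ge1\}$ the weight is $\le1$, so the term is bounded by $\|f\|_{q}^{q}$. On $\{\rho\le1\}$ I would, for fixed $\omega'\in S^{k-1}$ and fixed $z$, use $f(\rho\omega',z)=f(\omega',z)-\int_{\rho}^{1}\partial_{t}f(t\omega',z)\,dt$ inside $\int_{0}^{1}\rho^{k-1-s}|f(\rho\omega',z)|^{q}\,d\rho$. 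After integration in $\omega'$ and $z$, the boundary contribution is a surface integral of $|f|^{q}$ over the cylinder $\{\rho=1\}$, which a standard trace estimate bounds by $C(\|f\|_{q}^{q}+\|\nabla f\|_{q}^{q})$; the first radial factor $\int_{0}^{1}\rho^{k-1-s}\,d\rho=\frac{1}{k-s}$ is finite precisely because $s<k$. The remaining contribution is handled by Lemma \ref{lem2.2} with $\sigma=s-k+1<1$ (extending $h:=|\partial_{\rho}f|$ by zero past $\rho=1$), which produces the weight $\rho^{q+k-1-s}$; since $q-s\ge0$ gives $\rho^{q+k-1-s}\le\rho^{k-1}$ on $\{\rho\le1\}$, this is controlled by $C\|\nabla f\|_{q}^{q}$. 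Once both endpoints are in hand, the H\"{o}lder interpolation and the dilation argument deliver (\ref{2.7}); the particular case (\ref{2.7-0}) is the specialization $N=3$, $k=2$, $q=2$.

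I expect the main obstacle to be the top (critical) endpoint together with the fact that the singular set $\{x'=0\}$ is \emph{unbounded} in the transverse variable $z\in\R^{N-k}$. This rules out the naive route of applying H\"{o}lder's inequality to the factor $\rho^{-s}$, whose integral over a slab diverges in $z$; and in the borderline case $q=k$, which is exactly what occurs in the application $N=3,k=2,q=2$, it also rules out freezing $z$ and using a fiberwise $\R^{k}$ Hardy or Sobolev inequality, since the $k$-dimensional Hardy constant degenerates as $q\uparrow k$. Both difficulties are circumvented by exploiting the genuinely $N$-dimensional structure: the radial $1$-D Hardy inequality of Lemma \ref{lem2.2}, which only requires $s<k$, for the subcritical part, and the Hardy--Sobolev inequality of \cite{Marino} for the critical part.
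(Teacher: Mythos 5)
Your proposal is correct, but it reaches (\ref{2.7}) by a genuinely different route than the paper. Both arguments interpolate, via H\"{o}lder's inequality in the fixed measure $|x'|^{-s}dx$, between the same two endpoints $q_{*}=q$ and $q_{*}=\frac{q(N-s)}{N-q}$; the difference lies in how the endpoints are established and in what form. The paper proves both endpoints directly in multiplicative form by a fiberwise argument: for each fixed $z$ it splits the $x'$-integral at a radius $\varepsilon$, applies H\"{o}lder and the Sobolev embedding on each piece, then optimizes over $\varepsilon$ (with $\varepsilon$ depending on $z$ --- essentially a fiberwise version of your global dilation argument), and finally integrates in $z$ by H\"{o}lder; for the critical endpoint with $k=N$ it combines Hardy's inequality with H\"{o}lder and Sobolev. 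You instead prove only the additive bound $\|f/|x'|^{s/q_{*}}\|_{q_{*}}\le C(\|f\|_{q}+\|\nabla f\|_{q})$ and recover the product form at the end by one global dilation and optimization in $\lambda$, which works precisely because of your check that $a,b\ge 0$ on the stated range of $q_{*}$. Your subcritical endpoint proof (radial $1$-D Hardy, i.e.\ Lemma \ref{lem2.2} with $\sigma=s-k+1<1$, plus a trace estimate on the cylinder $\{|x'|=1\}$, using $s<k$ for $\int_{0}^{1}\rho^{k-1-s}d\rho<\infty$ and $s\le q$ for $\rho^{q+k-1-s}\le\rho^{k-1}$) is more elementary than the paper's and, incidentally, puts Lemma \ref{lem2.2} to work in a place the paper does not (the paper uses it only for Theorem \ref{thm1.1}). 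The one point where you lean on outside help is the critical endpoint, which you take from \cite{Marino}; this is legitimate --- the paper itself states that this case is Theorem 2.1 of Badiale--Tarantello and reproves it only for the reader's convenience --- but it makes your proof not self-contained where the paper's is, and your sketched alternative (symmetrization in $x'$ plus $1$-D Hardy plus Sobolev) is too vague to serve as a substitute as written. In short: the paper buys self-containedness and endpoint bounds that are already multiplicative, while you buy modularity, a more elementary subcritical endpoint, and a transparent scaling mechanism that explains where the exponents in (\ref{2.7}) come from.
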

\begin{proof} By the Sobolev embedding Theorem, one obtain (\ref{2.7}) with $s=0$.
 When $q_*=q=s<k$, one easily obtain (\ref{2.7}) from the Hardy's inequality \cite{Garcia},
    \begin{equation}
        \int_{\mathbb{R}^N}|x'|^{-q}|f|^qdx\leq
     C   \int_{\mathbb{R}^{N-k}}\|\nabla_{x'}f\|_{L^q_{x'}}^qdz\leq C\|\nabla f\|_q^q.\label{2.11}
    \end{equation}
Then, we assume that $f\neq 0$, $0<s<q$. Set $q^*=\frac{Nq}{N-q}$.

\textbf{Case 1.}  $q_*=\frac{q(N-s)}{N-q}$.

\textbf{Case 1a.} $k=N$.

From
 H\"{o}lder's inequality and
the Hardy's inequality, we have
    \begin{equation}
       \int_{\R^{N}} \frac{|f|^{\frac{q(N-s)}{N-q}}}{|x|^{s}}~dx
       \leq C \left(
      \int_{\R^{N}} \frac{|f|^q}{|x|^{q}}~dx
      \right)^\frac{s}{q}\left(\int_{\R^{N}} f^{q^*}  dx
      \right)^{\frac{q-s}{q}}\leq C \|\nabla f\|_q.\label{2.10-0}
    \end{equation}

\textbf{Case 1b.} $k<N$.

Using
 H\"{o}lder's inequality and the Sobolev embedding Theorem, we obtain
        \begin{eqnarray*}
           \left|\int_{|x'|\geq \varepsilon} |x'|^{-s}|f|^{\frac{q(N-s)}{N-q}}dx'\right|^{\frac{N-q}{q(N-s)}}
         &   \leq& \left|\int_{|x'|\geq \varepsilon} |x'|^{-N} dx'\right|^{\frac{s(N-q)}{Nq(N-s)}}\|f\|_{L^{q^*}_{x'}}\\
        &  \leq& C \varepsilon^{(k-N)\frac{s(N-q)}{Nq(N-s)}} \| f\|_{L^{q^*}_{x'}},
        \end{eqnarray*}
    and
        \begin{eqnarray*}
          \left|\int_{|x'|< \varepsilon} |x'|^{-s}|f|^{\frac{q(N-s)}{N-q}}dx'\right|^{\frac{N-q}{q(N-s)}}
       &   \leq& \left(\int_{|x'|< \varepsilon}|x'|^{-\frac{s(N-q)}{q(N-s)}\gamma}dx' \right)^{\frac{1}{\gamma}}
       \|f\|_{L^{\beta}_{x'}}\\
        &  \leq& C_{s,q}\varepsilon^{\frac{k}{\gamma}-\frac{s(N-q)}{q(N-s)}}
        \|f\|_{L^{q^*}_{x'}}^{\theta}\|\nabla_{x'}f\|_{L^q_{x'}}^{1-\theta},
        \end{eqnarray*}
    where $\gamma$ is a constant satisfying $\frac{q(N-s)}{N-q}<\gamma<\frac{kq(N-s)}{s(N-q)}$ and $\frac{N-q}{q(N-s)}+\frac{1}{k}-\frac{1}{q}\geq \frac{1}{\gamma}\geq \frac{N-q}{q(N-s)}+\frac{1}{N}-\frac{1}{q}$, $\frac{1}{\gamma}+\frac{1}{\beta}=\frac{N-q}{q(N-s)}$ and
    $\theta=\frac{\frac{N-q}{q(N-s)}+\frac{1}{k}-\frac{1}{\gamma}-\frac{1}{q}}{\frac{1}{k}-\frac{1}{N}}$. Then choosing $\varepsilon=\left(\frac{\|f\|_{L^{q^*}_{x'}}}{\|\nabla_{x'}f\|_{L^q_{x'}}}\right)^\frac{N}{N-k}$, we get
        \begin{equation}
          \||x'|^{-\frac{s(N-q)}{q(N-s)}}f\|_{L^{q_*}_{x'}}
          \leq C_{s,q}
          \|f\|_{L^{q^*}_{x'}}^{1-\frac{s(N-q)}{q(N-s)}}
          \|\nabla_{x'}f\|_{L^q_{x'}}^{\frac{s(N-q)}{q(N-s)}}.\label{2.9}
        \end{equation}
 From  (\ref{2.9}), using
 H\"{o}lder's inequality,   we have
        \begin{equation}
         \||x'|^{-\frac{s(N-q)}{q(N-s)}}f\|_{{q_*}}
          \leq C_{s,q}
          \|f\|_{{q^*}}^{1-\frac{s(N-q)}{q(N-s)}}
          \|\nabla_{x'}f\|_{q}^{\frac{s(N-q)}{q(N-s)}}
      \leq C_{s,q}
          \|\nabla f\|_{q}.\label{2.10}
    \end{equation}

\textbf{Case 2.} $q_*=q$.

    Using
 H\"{o}lder's inequality and the Sobolev embedding Theorem, we obtain
    $$
           \left|\int_{|x'|\geq \varepsilon} |x'|^{-s}|f|^{q}dx'\right|^{\frac{1}{q}}
            \leq    \varepsilon^{-\frac{s}{q}} \|f\|_{L^q_{x'}},
      $$
and
        \begin{eqnarray*}
         \left|\int_{|x'|< \varepsilon} |x'|^{-s}|f|^{q}dx'\right|^{\frac{1}{q}}
       &   \leq& \left(\int_{|x'|< \varepsilon}|x'|^{-\frac{s}{q}\gamma}dx' \right)^{\frac{1}{\gamma}} \|f\|_{L^{\beta}_{x'}}\\
        &  \leq& C_{s,q}\varepsilon^{\frac{k}{\gamma}-\frac{s}{q}}\|f\|_{L^q_{x'}}^{1-\frac{k}{\gamma}}
         \|\nabla_{x'}f\|_{L^q_{x'}}^{\frac{k}{\gamma}},
        \end{eqnarray*}
    where $\gamma$ is a constant satisfying $q<\gamma<\frac{kq}{s}$ and $\gamma\geq k$, $\frac{1}{\gamma}+
    \frac{1}{\beta}= \frac{1}{q}$. Then choosing $\varepsilon=\frac{\|f\|_{L^q_{x'}}}{\|\nabla_{x'}f\|_{L^q_{x'}}}$, we get
\begin{equation}
          \||x'|^{-\frac{s}{q}}f\|_{L^{q}_{x'}}
          \leq C_{s,q}\|f\|_{L^q_{x'}}^{1-\frac{s}{q}}
          \|\nabla_{x'}f\|_{L^q_{x'}}^{\frac{s}{q}}.\label{2.9-0}
        \end{equation}
 From  (\ref{2.9-0}), using
 H\"{o}lder's inequality,   we have
        \begin{equation}
       \||x'|^{-\frac{s}{q}}f\|_{q}\leq C_{s,q}\|f\|_{q}^{1-\frac{s}{q}}
          \|\nabla_{x'}f\|_{q}^{\frac{s}{q}}.\label{2.12}
    \end{equation}
 From (\ref{2.11}), (\ref{2.10-0}), (\ref{2.10})  and (\ref{2.12}), by the interpolation theorem, we have that (\ref{2.7}) holds for all  $q_*\in [q,\frac{q(N-s)}{N-q}]$.
 This finishes the proof of (\ref{2.7}).
\end{proof}

From the results in \cite{Pokorny2,Pokorny1,P.Zhang}, one have the following lemma.
\begin{lem}
\label{lem2.5}
Let $\mathbf{u}\in C([0,T);H^2(\R^3))\cap L^2_{loc}([0,T);H^3(\R^3))$ be the unique axisymmetric solution of the Navier-Stokes equations with the axisymmetric initial data $\mathbf{u_{0}}\in H^{2}(\R^{3})$ and $\mathrm{div}~\mathbf{u_{0}}=0$.
If in addition, $T<\infty$ and $ \|\frac{u^{\theta}}{r}\|_{L^{4,4}_{T}} < \infty$, then $\mathbf{u}$ can be continued beyond $T$.
\end{lem}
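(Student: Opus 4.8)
The statement is a continuation criterion, so the plan is to show that its hypotheses force an a priori bound, up to time $T$, on the scale-invariant vorticity energy
\[
\mathcal{E}(t):=\|\Phi(t)\|_{2}^{2}+\|\Gamma(t)\|_{2}^{2},\qquad \Phi=\frac{\omega^{r}}{r},\ \Gamma=\frac{\omega^{\theta}}{r},
\]
together with the dissipation $\int_{0}^{T}(\|\tilde\nabla\Phi\|_{2}^{2}+\|\tilde\nabla\Gamma\|_{2}^{2})\,dt$. Indeed, by the local well-posedness in $H^{2}$ and the continuation criteria for axisymmetric flows established in \cite{Pokorny2,Pokorny1,P.Zhang}, such a bound is precisely what is needed to extend $\mathbf{u}$ beyond $T$; I would therefore reduce the lemma to this single a priori estimate.

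The engine is the pair of pointwise identities (\ref{j1})--(\ref{j2}). Integrating them over $\R^{3}$ and adding, the convection terms drop out because $\mathrm{div}\,\mathbf{b}=0$, while the operator $\Delta+\frac{2}{r}\partial_{r}$ contributes, after integration by parts against the measure $dx$, the dissipation $\|\tilde\nabla\Phi\|_{2}^{2}+\|\tilde\nabla\Gamma\|_{2}^{2}$ on the left together with a nonnegative boundary term at $r=0$ (using $\int_{\R^{3}}\frac{2}{r}\partial_{r}f\cdot f\,dx=-2\pi\int f^{2}|_{r=0}\,dx_{3}\le0$). This yields
\[
\frac{1}{2}\frac{d}{dt}\mathcal{E}+\|\tilde\nabla\Phi\|_{2}^{2}+\|\tilde\nabla\Gamma\|_{2}^{2}\le |\mathrm{R}_{1}|+|\mathrm{R}_{2}|,
\]
with $\mathrm{R}_{1}=\int_{\R^{3}}(\omega^{r}\partial_{r}+\omega^{3}\partial_{3})\frac{u^{r}}{r}\,\Phi\,dx$ and $\mathrm{R}_{2}=-2\int_{\R^{3}}\frac{u^{\theta}}{r}\Gamma\Phi\,dx$.

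The swirl coupling term $\mathrm{R}_{2}$ is where the hypothesis enters, and it is routine. By H\"older's inequality ($L^{4}\cdot L^{4}\cdot L^{2}$) and the Gagliardo--Nirenberg inequality $\|h\|_{4}\le C\|h\|_{2}^{1/4}\|\tilde\nabla h\|_{2}^{3/4}$ (valid for axisymmetric $h$, since $|\nabla h|=|\tilde\nabla h|$), followed by Young's inequality,
\[
|\mathrm{R}_{2}|\le C\,\|\tfrac{u^{\theta}}{r}\|_{4}\,\|\Gamma\|_{2}^{1/4}\|\tilde\nabla\Gamma\|_{2}^{3/4}\,\|\Phi\|_{2}\le \frac{1}{2}\|\tilde\nabla\Gamma\|_{2}^{2}+C\,\|\tfrac{u^{\theta}}{r}\|_{4}^{8/5}\,\mathcal{E},
\]
so the top-order gradient is absorbed by the dissipation. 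The coefficient exponent $8/5$ is exactly the scaling-critical time exponent for $q=4$ (note $\frac{2}{8/5}+\frac{3}{4}=2$), and since $T<\infty$ and $8/5<4$, H\"older in time gives $\int_{0}^{T}\|\frac{u^{\theta}}{r}\|_{4}^{8/5}\,dt\le T^{3/5}\|\frac{u^{\theta}}{r}\|_{L^{4,4}_{T}}^{8/5}<\infty$; the $L^{4,4}_{T}$ hypothesis thus makes this Gronwall coefficient integrable on $[0,T]$.

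The main obstacle is the vortex-stretching term $\mathrm{R}_{1}$. Here I would use that the poloidal vorticity $\mathbf{w}:=\omega^{r}\mathbf{e_{r}}+\omega^{3}\mathbf{e_{3}}=\mathrm{curl}(u^{\theta}\boldsymbol{e_{\theta}})$ is divergence free, so $\omega^{r}\partial_{r}+\omega^{3}\partial_{3}=\mathbf{w}\cdot\nabla$ and, integrating by parts, $\mathrm{R}_{1}=-\int_{\R^{3}}\frac{u^{r}}{r}\,(\mathbf{w}\cdot\nabla\Phi)\,dx$; this is then combined with Lemma \ref{lem2.4}, which controls $\tilde\nabla\frac{u^{r}}{r}$, and hence by Sobolev embedding $\frac{u^{r}}{r}$, in terms of $\Gamma$. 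The genuine difficulty is the factor $\omega^{3}=\frac{1}{r}\partial_{r}(ru^{\theta})$, a swirl derivative that is not dominated by $(\Phi,\Gamma)$ alone: in the no-swirl case $\mathbf{w}\equiv0$ and $\mathrm{R}_{1}$ vanishes, so this term is exactly the obstruction produced by nonzero swirl. The resolution, carried out in \cite{Pokorny2,Pokorny1,P.Zhang}, couples the $\mathcal{E}$-estimate with an estimate for the enstrophy $\|\boldsymbol{\omega}\|_{2}$, whose own axisymmetric stretching is controlled by $\frac{u^{\theta}}{r}$ in the same critical fashion, all top-order gradients being absorbed into the dissipation by interpolation. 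Gronwall's inequality applied to the resulting closed system, with the time-integrable coefficient furnished by $\mathrm{R}_{2}$, then yields the a priori bound on $\mathcal{E}$, and a standard parabolic bootstrap upgrades it to $H^{2}$ (indeed smooth) control up to $T$, so $\mathbf{u}$ continues beyond $T$.
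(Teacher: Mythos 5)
First, a point of orientation: the paper does not prove Lemma \ref{lem2.5} at all --- it is imported from \cite{Pokorny2,Pokorny1,P.Zhang} (``From the results in ... one have the following lemma''), and the paper's own machinery (Lemma \ref{lem2.6}, hence Theorems \ref{thm1.2}, \ref{thm1.5}, \ref{thm1.4}) is built on top of it. Your proposal must therefore stand on its own, and it does not: the step you yourself call ``the genuine difficulty'', the stretching term $\mathrm{R}_{1}=\int_{\R^{3}}(\omega^{r}\partial_{r}+\omega^{3}\partial_{3})\frac{u^{r}}{r}\,\Phi\,dx$, is never estimated. You dispose of it by asserting that the references close a coupled system for $\mathcal{E}$ and the enstrophy $\|\boldsymbol{\omega}\|_{2}$, with the stretching ``controlled by $\frac{u^\theta}{r}$ in the same critical fashion''; that claim is unsubstantiated, and it is exactly where the argument breaks. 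Writing $u^{\theta}=\frac{u^{\theta}}{r}\cdot r$, H\"older against $\|\frac{u^{\theta}}{r}\|_{4}$ forces the factor $r\tilde\nabla\frac{u^{r}}{r}=(\partial_{r}u^{r}-\frac{u^{r}}{r},\,\partial_{3}u^{r})$ into $L^{4}$, which by interpolation requires $\|\nabla^{2}\mathbf{u}\|_{2}$; neither the dissipation $\|\tilde\nabla\Phi\|_{2}^{2}+\|\tilde\nabla\Gamma\|_{2}^{2}$ (which controls derivatives of $\boldsymbol{\omega}/r$, not of $\boldsymbol{\omega}$) nor the energy inequality (\ref{3.1}) provides this. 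The alternative of using $\|u^{\theta}\|_{4}$ instead leads to a Gronwall coefficient $\|u^{\theta}\|_{4}^{8}\lesssim\|\mathbf{u}\|_{2}^{2}\|\nabla\mathbf{u}\|_{2}^{6}$, which is not time-integrable. The coupled system (\ref{1.6}) with the identities (\ref{j1})--(\ref{j2}) is engineered for the hypotheses $r^{d}u^{\theta}$, $0\le d<1$, of Theorem \ref{thm1.2}, where $I_{1},I_{2}$ are tamed by the Sobolev--Hardy inequality of Lemma \ref{lem2.1}; it is the wrong vehicle for a hypothesis on $\frac{u^{\theta}}{r}$. There is also a circularity in your opening reduction: within this paper, ``a bound on $\|\Gamma\|_{L^{\infty,2}}$ implies continuation'' is Lemma \ref{lem2.6}, whose proof invokes Lemma \ref{lem2.5}; so that implication must itself be imported from the references, at which point the partial estimate you do carry out adds nothing beyond what is being cited.

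The hypothesis $\frac{u^{\theta}}{r}\in L^{4,4}_{T}$ is tailored to a different, simpler computation: discard $\Phi$ entirely and use only the $\Gamma$-equation of the untagged system preceding (\ref{1.6}), namely $(\partial_{t}+\mathbf{b}\cdot\nabla-\Delta-\frac{2}{r}\partial_{r})\Gamma=\partial_{3}(\frac{u^{\theta}}{r})^{2}$. Testing with $\Gamma$ (convection and $\frac{2}{r}\partial_{r}$ terms handled exactly as in (\ref{3.2})) and integrating by parts in $x_{3}$ gives
\begin{equation*}
\frac{1}{2}\frac{d}{dt}\|\Gamma\|_{2}^{2}+\|\tilde\nabla\Gamma\|_{2}^{2}\le\int_{\R^{3}}\Bigl(\frac{u^{\theta}}{r}\Bigr)^{2}|\partial_{3}\Gamma|\,dx\le\frac{1}{2}\Bigl\|\frac{u^{\theta}}{r}\Bigr\|_{4}^{4}+\frac{1}{2}\|\partial_{3}\Gamma\|_{2}^{2},
\end{equation*}
so $\Gamma\in L^{\infty,2}_{T}\cap L^{2}_{T}\dot{H}^{1}$ follows from the hypothesis with no stretching term ever appearing; your bound on $\mathrm{R}_{2}$ is in fact a roundabout form of this, since $-2\frac{u^{\theta}}{r}\Phi=\partial_{3}(\frac{u^{\theta}}{r})^{2}$. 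What remains --- upgrading the $\Gamma$ bound to actual continuation, via Lemma \ref{lem2.4}-type control of $\frac{u^{r}}{r}$ and higher-order estimates --- is the substantive content of \cite{Pokorny2,Pokorny1,P.Zhang}, and it is precisely the part a self-contained proof of Lemma \ref{lem2.5} would still have to supply.
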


\section{Proof of the regularity criteria}\label{S3}
It is well-known that: if the axisymmetric initial data $\mathbf{u_{0}}\in H^{2}(\R^{3})$ and $\mathrm{div}~\mathbf{u_{0}}=0$, we can construct a global axisymmetric weak solution $\mathbf{u}$ (see \cite{Cafferelli,Chae}), satisfying the energy inequality,
\begin{equation}\label{3.1}
\frac{1}{2}\|\mathbf{u}(t)\|^{2}_{2}+\int_{0}^{t}\|\nabla\mathbf{u}\|_{2}^{2}~d\tau \leq\frac{1}{2}\|\mathbf{u_{0}}\|_{2}^{2},~\textrm{ for all  } t\geq0.
\end{equation} Moreover, the system (\ref{1.1}) has a local unique solution $\mathbf{u}$ on $[0,T^{*})$, satisfying $\mathbf{u}\in C([0,T^{*});H^{2}(\R^{3}))\cap L^{2}_{loc}([0,T^{*});\dot{H}^{3}(\R^{3}))$,  where
$T^{*}$ is the earliest blow-up point  (see \cite{Constantin.P} for instance).

\begin{lem}\label{lem2.6}
Let $\mathbf{u}\in C([0,T);H^2(\R^3))\cap L^2_{loc}([0,T);H^3(\R^3))$ be the unique axisymmetric solution of the Navier-Stokes equations with the axisymmetric initial data $\mathbf{u_{0}}\in H^{2}(\R^{3})$ and $\mathrm{div}~\mathbf{u_{0}}=0$.
If in addition, $T<\infty$ and $\sup_{t\in[0,T)} \|\Gamma\|_{L^{\infty,2}_{t}} < \infty$, then $\mathbf{u}$ can be continued beyond $T$.
\end{lem}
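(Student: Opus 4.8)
The plan is to reduce everything to Lemma \ref{lem2.5}: since $T<\infty$, it suffices to bound $\frac{u^{\theta}}{r}$ in $L^{4,4}_{T}$, and for this I would in fact prove the stronger statement $\frac{u^{\theta}}{r}\in L^{\infty}([0,T);L^{4}(\R^{3}))$, which on a finite time interval immediately gives membership in $L^{4,4}_{T}$. The starting point is the evolution equation for $g:=\frac{u^{\theta}}{r}$. Substituting $u^{\theta}=rg$ into the second equation of (\ref{1.2}) and simplifying (the diffusion operator $\partial_r^2+\partial_3^2+\frac1r\partial_r-\frac1{r^2}$ acting on $rg$ produces $r(\Delta+\frac2r\partial_r)g$, while the transport and reaction terms each contribute one copy of $u^{r}g$), one finds
\[
\partial_{t}g+(\mathbf{b}\cdot\nabla)g-\Big(\Delta+\tfrac{2}{r}\partial_{r}\Big)g+\tfrac{2u^{r}}{r}\,g=0.
\]
The only term coupling $g$ to the rest of the flow is the reaction term $\frac{2u^{r}}{r}g$, and controlling it through the hypothesis on $\Gamma=\frac{\omega^{\theta}}{r}$ is the crux of the argument.

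Next I would run an $L^{4}$ energy estimate. Since $\mathbf{u}$ is smooth on $[0,T)$, multiplying the equation by $g^{3}$ and integrating over $\R^{3}$ is legitimate. The convection term drops out because $\mathrm{div}\,\mathbf{b}=0$, and integration by parts (in the axisymmetric measure $dx=2\pi r\,dr\,dx_{3}$) gives
\[
-\int_{\R^{3}}\Big(\Delta+\tfrac{2}{r}\partial_{r}\Big)g\cdot g^{3}\,dx=3\int_{\R^{3}}g^{2}|\nabla g|^{2}\,dx+\pi\int_{\R}g^{4}(0,x_{3})\,dx_{3}\ \geq\ \tfrac34\|\nabla(g^{2})\|_{2}^{2},
\]
the axis term being nonnegative. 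Hence
\[
\tfrac14\tfrac{d}{dt}\|g\|_{4}^{4}+\tfrac34\|\nabla(g^{2})\|_{2}^{2}\leq 2\Big|\int_{\R^{3}}\tfrac{u^{r}}{r}\,g^{4}\,dx\Big|.
\]

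It remains to absorb the right-hand side, and this is exactly where the hypothesis enters. By Lemma \ref{lem2.4} (inequality (\ref{2.6}) with $q=2$) together with the Sobolev embedding $\dot H^{1}\hookrightarrow L^{6}$, one has $\|\tfrac{u^{r}}{r}\|_{6}\leq C\|\tilde\nabla\tfrac{u^{r}}{r}\|_{2}\leq C\|\Gamma\|_{2}$, so that $\|\Gamma\|_{2}$ controls $\tfrac{u^{r}}{r}$ in $L^{6}$. Treating $g^{2}$ as the basic quantity and using H\"older together with the Gagliardo--Nirenberg inequality $\|g^{2}\|_{12/5}\leq C\|g^{2}\|_{2}^{3/4}\|\nabla(g^{2})\|_{2}^{1/4}$, I would estimate
\[
\Big|\int_{\R^{3}}\tfrac{u^{r}}{r}g^{4}\,dx\Big|\leq\|\tfrac{u^{r}}{r}\|_{6}\,\|g^{2}\|_{12/5}^{2}\leq C\|\Gamma\|_{2}\,\|g\|_{4}^{3}\,\|\nabla(g^{2})\|_{2}^{1/2},
\]
where I used $\|g^{2}\|_{2}=\|g\|_{4}^{2}$. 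Young's inequality then splits this bound into $\tfrac38\|\nabla(g^{2})\|_{2}^{2}+C\|\Gamma\|_{2}^{4/3}\|g\|_{4}^{4}$, the first term being absorbed by the left-hand side, leaving
\[
\tfrac{d}{dt}\|g\|_{4}^{4}\leq C\|\Gamma\|_{2}^{4/3}\,\|g\|_{4}^{4}.
\]

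Finally, the hypothesis $\sup_{t\in[0,T)}\|\Gamma\|_{L^{\infty,2}_{t}}<\infty$ means $M:=\mathrm{ess\,sup}_{t\in(0,T)}\|\Gamma(t)\|_{2}<\infty$, hence $\int_{0}^{T}\|\Gamma\|_{2}^{4/3}\,dt\leq T M^{4/3}<\infty$ because $T<\infty$. Gr\"onwall's inequality then yields $\|g(t)\|_{4}^{4}\leq\|g(0)\|_{4}^{4}\exp\big(C\int_{0}^{t}\|\Gamma\|_{2}^{4/3}\,ds\big)$, which is bounded on $[0,T)$ provided $g(0)=\frac{u_{0}^{\theta}}{r}\in L^{4}$; this last fact follows from $\mathbf{u_{0}}\in H^{2}(\R^{3})$ via Lemma \ref{lem2.3}(iii) and the embedding $H^{1}\hookrightarrow L^{4}$. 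Thus $\frac{u^{\theta}}{r}\in L^{\infty}([0,T);L^{4})\subset L^{4,4}_{T}$, and Lemma \ref{lem2.5} permits continuation of $\mathbf{u}$ beyond $T$. I expect the main obstacle to be the reaction term $\frac{2u^{r}}{r}g$: its contribution is out of reach for the bare energy class $L^{\infty}_{T}L^{2}\cap L^{2}_{T}\dot H^{1}$ (which does not embed into $L^{4,4}_{T}$), so the estimate must be performed at the $L^{4}$ level, and closing it depends critically on the one-derivative gain in Lemma \ref{lem2.4} that converts the assumed $L^{2}$ bound on $\Gamma$ into an $L^{6}$ bound on $\frac{u^{r}}{r}$.
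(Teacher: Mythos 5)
Your proposal is correct, and it shares the paper's skeleton: both proofs reduce to Lemma \ref{lem2.5} by producing $\frac{u^{\theta}}{r}\in L^{4,4}_{T}$, both let the hypothesis on $\Gamma$ enter only through the chain $\|\frac{u^{r}}{r}\|_{6}\leq C\|\tilde\nabla\frac{u^{r}}{r}\|_{2}\leq C\|\Gamma\|_{2}$ (Lemma \ref{lem2.4} plus Sobolev embedding), and both close with Gronwall on the finite interval $[0,T)$. Where you genuinely differ is the choice of energy functional and where the $L^{4,4}$ information comes from. The paper multiplies (\ref{1.2})$_{2}$ by $\frac{(u^{\theta})^{3}}{r^{2}}$ and tracks $\|\frac{(u^{\theta})^{2}}{r}\|_{2}^{2}$; the quantity $\|\frac{u^{\theta}}{r}\|_{4}^{4}$ then appears as a coercive term on the left-hand side of the differential inequality, so the $L^{4,4}_{T}$ bound is harvested from the time-integrated dissipation. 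You instead derive the transport--diffusion equation for $g=\frac{u^{\theta}}{r}$ (your derivation is correct, as is the nonnegativity of the axis term $\pi\int_{\R}g^{4}(0,x_{3})\,dx_{3}$, which uses the smoothness of $\frac{u^\theta}{r}$ from Lemma \ref{lem2.3}) and run the $L^{4}$ estimate on $g$ itself, obtaining the stronger conclusion $\frac{u^{\theta}}{r}\in L^{\infty}([0,T);L^{4})$ and then $L^{4,4}_{T}$ trivially from $T<\infty$. Your exponents all check out: the H\"{o}lder split $\frac{1}{6}+\frac{5}{12}+\frac{5}{12}=1$, the Gagliardo--Nirenberg exponent $\frac{3}{4}$, and the Young absorption into $\frac{3}{4}\|\nabla(g^{2})\|_{2}^{2}$; moreover the initial-data requirement $\frac{u_{0}^{\theta}}{r}\in L^{4}$ --- slightly stronger than the paper's $\frac{u_{0}^{\theta}}{\sqrt{r}}\in L^{4}$ --- does follow from Lemma \ref{lem2.3}(iii) with $q=4$ and $\mathbf{u_{0}}\in H^{2}$. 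Neither route avoids the finiteness of $T$, since both Gronwall factors are of the form $\exp\bigl(CT\sup_{t}\|\Gamma\|_{2}^{4/3}\bigr)$; the trade-off is that your version costs one extra computation (the $g$-equation) and works with the more singular weight $r^{-4}$, but returns the marginally stronger uniform-in-time $L^{4}$ bound on $\frac{u^{\theta}}{r}$ rather than only its space-time $L^{4}$ norm.
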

\begin{proof}
  Multiplying the $u^{\theta}$ equation of (\ref{1.2})$_2$ by $\frac{(u^{\theta})^{3}}{r^{2}}$, and integrating the resulting equation over $\R^{3}$, applying (\ref{2.6}) and Cauchy-Schwarz inequality, we have for all $t\in[0,T)$
\begin{eqnarray*}
\frac{1}{4}\frac{d}{dt}\left\|\frac{(u^{\theta})^{2}}{r}\right\|_{2}^{2}+\frac{3}{4}\left\|\tilde\nabla\frac{(u^{\theta})^{2}}{r}\right\|_{2}^{2}
+\frac{3}{4}\left\|\frac{u^{\theta}}{r}\right\|_{4}^{4}&~=~& -\frac{3}{2}\int_{\R^{3}}\frac{u^{r}}{r} \frac{(u^{\theta})^{2}}{r}\frac{(u^{\theta})^{2}}{r}~dx \\
&~\leq~& C \left\|\frac{u^{r}}{r}\right\|_{6} ~\left\|\frac{(u^{\theta})^{2}}{r}\right\|_{2}~ \left\|\frac{(u^{\theta})^{2}}{r}\right\|_{3} \\
&~\leq~& C \left\|\tilde\nabla\frac{u^{r}}{r}\right\|_{2} ~\left\|\frac{(u^{\theta})^{2}}{r}\right\|_{2}^{\frac{3}{2}}~ \left\|\tilde\nabla\frac{(u^{\theta})^{2}}{r}\right\|_{2}^{\frac{1}{2}}\\
&~\leq~& C \|\Gamma\|_{2}^{\frac{4}{3}} ~\left\|\frac{(u^{\theta})^{2}}{r}\right\|_{2}^{2}+ \frac{1}{2}\left\|\tilde\nabla\frac{(u^{\theta})^{2}}{r}\right\|_{2}^{2}.
\end{eqnarray*}
 Applying Gronwall's inequality, using the fact that $\sup_{t\in[0,T)} \|\Gamma\|_{L^{\infty,2}_{t}} < \infty$ , we   obtain
$ \|\frac{u^{\theta}}{r}\|_{L^{4,4}_{T}}^{4} \leq C(\|\frac{u^{\theta}_0}{\sqrt{r}}\|_{4}^{4},T)$. Thus we prove this lemma by Lemma \ref{lem2.5}.
\end{proof}

\vspace{0.5cm}
\noindent\textbf{Proof of Theorem \ref{thm1.2}.}

Assume that $T^*\leq T$. From (\ref{j1}) and (\ref{j2}),  integrating them over $\R^3$ respectively, we
 obtain for all $t\in[0,T^*)$
\begin{eqnarray}
\frac{1}{2}\frac{d}{dt}\|\Phi\|_{2}^{2}+\|\tilde\nabla\Phi\|_{2}^{2}  &=& \int_{\R^{3}}(\omega^{r}\partial_{r}+\omega^{3}\partial_{3})\frac{u^{r}}{r}~\Phi ~dx+\int_{\R}\int_{0}^{\infty}\partial_{r}(\Phi)^{2}~drdx_{3}\nonumber\\
&\leq&\int_{\R^{3}}(\omega^{r}\partial_{r}+\omega^{3}\partial_{3})\frac{u^{r}}{r}~\Phi ~dx \nonumber\\
&=&2\pi \int_{\R}\int^\infty_0(-\partial_{3}u^{\theta}\partial_{r}\frac{u^{r}}{r}\Phi+\frac{\partial_{r}(ru^{\theta})}{r}\partial_{3} \frac{u^{r}}{r}\Phi)~rdrdx_{3}\nonumber\\
&=&\int_{\R^{3}}u^{\theta}(\partial_{3}\partial_{r}\frac{u^{r}}{r}\Phi+\partial_{r}\frac{u^{r}}{r}\partial_{3}\Phi)~dx
-\int_{\R^{3}}u^{\theta}(\partial_{r}\partial_{3} \frac{u^{r}}{r}\Phi+\partial_{3} \frac{u^{r}}{r}\partial_{r}\Phi)~dx\nonumber\\
&=& \int_{\R^{3}}u^{\theta}(\partial_{r}\frac{u^{r}}{r}\partial_{3}\Phi-\partial_{3} \frac{u^{r}}{r}\partial_{r}\Phi)~dx\nonumber\\
&\leq&I_{1}+I_{2}\label{3.2}
\end{eqnarray}
and
\begin{equation}\label{3.3}
\frac{1}{2}\frac{d}{dt}\|\Gamma\|_{2}^{2}+\|\tilde\nabla\Gamma\|_{2}^{2}
 \leq 2I_{3},
\end{equation}
where
\begin{equation*}
I_{1}=\int_{\R^{3}}\left|u^{\theta}\partial_{r}\frac{u^{r}}{r}\partial_{3}\Phi\right|~dx,\ \
I_{2}=\int_{\R^{3}}\left|u^{\theta}\partial_{3} \frac{u^{r}}{r}\partial_{r}\Phi\right|~dx,\ \
I_{3}=\int_{\R^{3}}\left|\frac{u^{\theta}}{r}~\Phi~\Gamma\right|~dx.
\end{equation*}
From (\ref{3.2}) and (\ref{3.3}), we get for all $t\in[0,T^*)$
\begin{equation}\label{3.4}
\frac{1}{2}\frac{d}{dt}(\|\Phi\|_{2}^{2}+\|\Gamma\|_{2}^{2})+\|\tilde\nabla\Phi\|_{2}^{2}+\|\tilde\nabla\Gamma\|_{2}^{2}\leq I_{1}+I_{2}+2I_{3}.
\end{equation}
Then, we estimate $I_{1},I_{2},I_{3}$ in the following two cases  respectively.

\textbf{Case 1.} $r^d u^\theta\in L^{p,q}_T$, $\frac{2}{p}+\frac{3}{q}\leq 1-d$, $0\leq d<1$, $ \frac{3}{1-d} < q\leq \infty$, $\frac{2}{1-d} \leq p\leq\infty$.\\

From  H\"{o}lder's inequality, Lemma \ref{lem2.1} ($s=\frac{2dq}{q-2}$, $q_*=\frac{2q}{q-2}$) and (\ref{2.6})-(\ref{2.6-1}), we get, for all $t\in[0,T^*)$,
\begin{eqnarray}
I_{1}
&\leq&\|r^{d}u^{\theta}\|_{q} ~\| \frac{\partial_{r}\frac{u^{r}}{r}}{r^{d}}~\|_{\frac{2q}{q-2}} ~\|\partial_{3}\Phi\|_{2}\nonumber\\
&\leq&C\|r^{d}u^{\theta}\|_{q} ~\| \tilde\nabla \partial_{r}\frac{u^{r}}{r}\|^{\frac{3}{q}+d}_{2}~ \|\partial_{r}\frac{u^{r}}{r}\|^{1-\frac{3}{q}-d}_{2}~ \|\partial_{3}\Phi\|_{2}\nonumber\\
&\leq&C\|r^{d}u^{\theta}\|_{q} ~\| \tilde\nabla \Gamma\|^{\frac{3}{q}+d}_{2}~ \|\Gamma\|^{1-\frac{3}{q}-d}_{2}~ \| \tilde\nabla\Phi\|_{2}\nonumber\\
&\leq&C \|r^{d}u^{\theta}\|_{q}^{\frac{2}{1-\frac{3}{q}-d}}~\|\Gamma\|_{2}^{2}+\frac{1}{8}\|\tilde\nabla\Phi\|_{2}^{2}+\frac{1}{8}\|\tilde\nabla\Gamma\|_{2}^{2}.\label{3.5}
\end{eqnarray}
Similarly, we have for all $t\in[0,T^*)$
\begin{equation}\label{3.6}
I_{2}\leq C \|r^{d}u^{\theta}\|_{q}^{\frac{2}{1-\frac{3}{q}-d}}~\|\Gamma\|_{2}^{2}+\frac{1}{8}\|\tilde\nabla\Phi\|_{2}^{2}+\frac{1}{8}\|\tilde\nabla\Gamma\|_{2}^{2}.
\end{equation}
From H\"{o}lder's inequality, Lemma \ref{lem2.1} ($s=\frac{(1+d)q}{q-1}$, $q_*=\frac{2q}{q-1}$) and (\ref{2.6})-(\ref{2.6-1}), we obtain for all $t\in[0,T^*)$
\begin{eqnarray}\label{3.7}
I_{3}
&\leq& \|r^{d}u^{\theta}\|_{q} \|r^{-\frac{1+d}{2}}\Gamma \|_{\frac{2q}{q-1}} \|r^{-\frac{1+d}{2}}\Phi \|_{\frac{2q}{q-1}} \nonumber\\
&\leq& C  \|r^{d}u^{\theta}\|_{q}(\|\Gamma\|_2\|\Phi\|_2)^{\frac{1-\frac{3}{q}-d}{2}}(\|\tilde\nabla\Gamma\|_{2}\|\tilde\nabla\Phi\|_{2})^{\frac{1+\frac{3}{q}+d}{2}}\nonumber\\
& \leq&  C \|r^{d}u^{\theta}\|_{q}^{\frac{2}{1-\frac{3}{q}-d}}\|\Gamma\|_{2}\|\Phi\|_2+\frac{1}{8}\|\tilde\nabla\Phi\|_{2}^{2}+\frac{1}{8}\|\tilde\nabla\Gamma\|_{2}^{2}.
\end{eqnarray}
From (\ref{3.4}), (\ref{3.5}), (\ref{3.6}), (\ref{3.7}), we get for all $t\in[0,T^*)$
\begin{equation}\label{3.8}
\frac{1}{2}\frac{d}{dt}(\|\Phi\|_{2}^{2}+\|\Gamma\|_{2}^{2})+\frac{1}{2}\|\tilde\nabla\Phi\|_{2}^{2}+\frac{1}{2}\|\tilde\nabla\Gamma\|_{2}^{2}\leq C (1+\|r^{d}u^{\theta}\|_{q}^{p})(\|\Phi\|_{2}^{2}+\|\Gamma\|_{2}^{2}).
\end{equation}
Using Gronwall's inequality, we have
\begin{equation}\label{3.9}
\sup_{t\in[0,T^*)} \|\Gamma\|_{L^{\infty,2}_{t}}^2\leq (\|\Phi_0\|_{2}^{2}+\|\Gamma_0\|_{2}^{2})\exp(CT+C\|r^{d}u^{\theta}\|_{L^{p,q}_{T}}^p)<\infty.
\end{equation}

\textbf{Case 2.} $r^d u^\theta\in L^{\infty,\frac{3}{1-d}}_T$, $0\leq d<1$.
For a small constant $\varepsilon>0$ given in (\ref{*}), there exists $\alpha>0$ such that
    \begin{equation}
      \|r^d u^\theta 1_{r\leq \alpha}\|_{L^{\infty,\frac{3}{1-d}}_T}\leq \varepsilon.
    \end{equation}
Similar to (\ref{3.5}), we have for all $t\in[0,T^*)$
    \begin{eqnarray}
I_{1}
&\leq&C \|1_{r\leq \alpha }r^{d}u^{\theta}\|_{\frac{3}{1-d}}\|\tilde\nabla\Phi\|_{2}\|\tilde\nabla\Gamma\|_{2}
+C\|r^{d}u^{\theta}\|_{\frac{3}{1-d}}\|1_{r\geq\alpha} \frac{\partial_{r}\frac{u^{r}}{r}}{r^{d}}~\|_{\frac{6}{1+2d}} ~\|\partial_{3}\Phi\|_{2}\nonumber\\
    &\leq&C \|1_{r\leq \alpha }r^{d}u^{\theta}\|_{\frac{3}{1-d}}\|\tilde\nabla\Phi\|_{2}\|\tilde\nabla\Gamma\|_{2}
\nonumber\\
&+&C_\alpha\|r^{d}u^{\theta}\|_{\frac{3}{1-d}}(\| \partial_{r}\frac{u^{r}}{r}\|_{2}+
\| \partial_{r}\frac{u^{r}}{r}  \|_{2}^{\frac{1+2d}{3}}\|\tilde{\nabla} \partial_{r}\frac{u^{r}}{r} \|_{2}^{\frac{2-2d}{3}}
) \|\partial_{3}\Phi\|_{2}\nonumber\\
&\leq&C \|1_{r\leq \alpha }r^{d}u^{\theta}\|_{\frac{3}{1-d}}\|\tilde\nabla\Phi\|_{2}\|\tilde\nabla\Gamma\|_{2}
+C_\alpha\|r^{d}u^{\theta}\|_{\frac{3}{1-d}}(\|\Gamma\|_{2}+
\|\Gamma\|_{2}^{\frac{1+2d}{3}}\|\tilde{\nabla}\Gamma\|_{2}^{\frac{2-2d}{3}}
) \|\tilde{\nabla}\Phi\|_{2}\nonumber\\
&\leq&(\frac{1}{8}+C \varepsilon)(\|\tilde\nabla\Phi\|_{2}^2+\|\tilde\nabla\Gamma\|_{2}^2)
+C_\alpha(1+\|r^{d}u^{\theta}\|_{\frac{3}{1-d}}^{\frac{6}{1+2d}})\|\Gamma\|_{2}^2,\label{3.11-0}
\end{eqnarray}
where we use the following estimate for $p\in[2,\infty)$,
    \begin{eqnarray*}
      &&\left(\int_{\R}\int^\infty_\alpha |f|^prdrdx_3
      \right)^{\frac{1}{p}}\\
      &\leq&C\left(\int_{\R}\int^\infty_\alpha |f|^2r^{\frac{2}{p}}drdx_3
      \right)^{\frac{1}{p}}\left(\int_{\R}\int^\infty_\alpha |\tilde{\nabla}(f r^{\frac{1}{p}})|^2drdx_3
      \right)^{\frac{1}{2}-\frac{1}{p}}\\
      &\leq&C_\alpha \|f\|_2+C_\alpha\|f\|_2^\frac{2}{p}\|\tilde{\nabla}f\|_2^{1-\frac{2}{p}}.
    \end{eqnarray*}
Similarly, we get for all $t\in[0,T^*)$
    \begin{equation}
      I_2\leq (\frac{1}{8}+C \varepsilon)(\|\tilde\nabla\Phi\|_{2}^2+\|\tilde\nabla\Gamma\|_{2}^2)
+C_\alpha(1+\|r^{d}u^{\theta}\|_{\frac{3}{1-d}}^{\frac{6}{1+2d}})\|\Gamma\|_{2}^2,\label{3.12-0}
    \end{equation}
    and
    \begin{eqnarray}
I_{3}
&\leq&C \|1_{r\leq \alpha }r^{d}u^{\theta}\|_{\frac{3}{1-d}}\|\tilde\nabla\Phi\|_{2}\|\tilde\nabla\Gamma\|_{2}
+C\|r^{d}u^{\theta}\|_{\frac{3}{1-d}}\|1_{r\geq\alpha} r^{-\frac{1+d}{2}}\Gamma\|_{\frac{6}{2+d}}\|1_{r\geq\alpha} r^{-\frac{1+d}{2}}\Phi\|_{\frac{6}{2+d}}\nonumber\\
    &\leq&C \|1_{r\leq \alpha }r^{d}u^{\theta}\|_{\frac{3}{1-d}}\|\tilde\nabla\Phi\|_{2}\|\tilde\nabla\Gamma\|_{2}\nonumber\\
        &&
+C_\alpha\|r^{d}u^{\theta}\|_{\frac{3}{1-d}}(\|\Gamma\|_{2}+
\|\Gamma\|_{2}^{\frac{2+d}{3}}\|\tilde{\nabla}\Gamma\|_{2}^{\frac{1-d}{3}}
) (\|\Phi\|_{2}+
\|\Phi\|_{2}^{\frac{2+d}{3}}\|\tilde{\nabla}\Phi\|_{2}^{\frac{1-d}{3}}
)\nonumber\\
&\leq&(\frac{1}{8}+C \varepsilon)(\|\tilde\nabla\Phi\|_{2}^2+\|\tilde\nabla\Gamma\|_{2}^2)
+C_\alpha(1+\|r^{d}u^{\theta}\|_{\frac{3}{1-d}}^{\frac{3}{2+d}})(\|\Phi\|_{2}^{2}+\|\Gamma\|_{2}^2).\label{3.13}
\end{eqnarray}
From (\ref{3.4}), (\ref{3.11-0}), (\ref{3.12-0}), (\ref{3.13}), we have, for all $t\in[0,T^*)$,
\begin{equation*}
\frac{1}{2}\frac{d}{dt}(\|\Phi\|_{2}^{2}+\|\Gamma\|_{2}^{2})+(\frac{5}{8}-3C\varepsilon)(\|\tilde\nabla\Phi\|_{2}^{2}+
\|\tilde\nabla\Gamma\|_{2}^{2})\leq C_\alpha(1+\|r^{d}u^{\theta}\|_{\frac{3}{1-d}}^{\frac{6}{1+2d}})
(\|\Phi\|_{2}^{2}+\|\Gamma\|_{2}^{2}).
\end{equation*}
When
    \begin{equation}
      3C\varepsilon=\frac{1}{8},\label{*}
    \end{equation}
    and applying Gronwall's inequality, we get
\begin{equation}
\sup_{t\in[0,T^*)} \|\Gamma\|_{L^{\infty,2}_{t}}^2\leq
(\|\Phi_0\|_{2}^{2}+\|\Gamma_0\|_{2}^{2})\exp\{C_\alpha T(1+\|r^{d}u^{\theta}\|_{L^{\infty,\frac{3}{1-d}}_{T}}^{\frac{6}{1+2d}})\}<\infty.\label{3.15}
\end{equation}
From Lemma \ref{lem2.6}, (\ref{3.9}) and (\ref{3.15}), then $\mathbf{u}$ can be continued beyond $T^*$, which contradicts with the definition of $T^*$. Thus, $T^*>T$,
 which finishes the proof of Theorem \ref{thm1.2}. $\hfill\Box$

Using Theorem \ref{thm1.2} and Lemma \ref{lem2.2}, we obtain Theorem \ref{thm1.1} as follows.

\vspace{0.5cm}
\noindent\textbf{Proof of Theorem \ref{thm1.1}.}

Choosing $\sigma=2q-1>1$, $F=ru^{\theta}$ and $f=r\omega^{3}$ in  Lemma \ref{lem2.2}, we deduce that
$$
\int_{0}^{\infty}(\frac{u^{\theta}}{r})^{q}~rdr<C(q)^{q}\int_{0}^{\infty}(\omega^{3})^{q}~rdr,
$$
and
$$
\|\frac{u^{\theta}}{r}\|_{q}<C(q)\|\omega^{3}\|_{q}.
$$
Using the fact that $ru^{\theta}\in L^{\infty,\infty}_{T}$ and the interpolation theorem, we obtain that
$
u^{\theta}\in L^{2p,2q}_{T}.
$ Then $\mathbf{u}$ is regular, by Theorem \ref{thm1.2}.   $\hfill\Box$

\vspace{0.5cm}
\noindent\textbf{Proof of Theorem \ref{thm1.5}}

Assume that $T^*\leq T$.
Applying Lemma \ref{lem2.3}, the potential function $\psi$ satisfies
\begin{equation}\label{3.16}
u^{r}=-\partial_{3}\psi,\ \
ru^{3}=\partial_{r}(r\psi).
\end{equation}
From (\ref{3.16}), we get
\begin{equation}\label{3.17}
\frac{\psi}{r^{1-d}}=\int_{0}^{1}s^{1-d}(sr)^{d}u^{3}(sr,x_{3})~ds.
\end{equation}
Applying Minkowski's inequality, we obtain
\begin{equation}\label{3.18}
\|\frac{\psi}{r^{1-d}}\|_{q}\leq C \|r^{d}u^{3}\|_{q}.
\end{equation}

\textbf{Case 1.} $r^du^3\in L^{p,q}_T$, $\frac{2}{p}+\frac{3}{q}\leq1-d, 0\leq d<1, \frac{3}{1-d} < q\leq\infty,\frac{2}{1-d} \leq p<\infty$.

Pick $0<k=1-\frac{3}{q(1-d)}\leq1$.
Multiplying the $u^{\theta}$ equation of (\ref{1.2})$_2$ by $(u^{\theta})^{3}$, and integrating the resulting equation over $\R^{3}$,  using the integration by parts,
 H\"{o}lder's inequality and the Sobolev embedding theorem, we have
\begin{eqnarray}
&&\frac{1}{4}\frac{d}{dt}\|u^{\theta}\|_{4}^{4}+\frac{3}{4}\|\tilde\nabla(u^{\theta})^{2}\|_{2}^{2}+\|\frac{(u^{\theta})^{2}}{r}\|_{2}^{2}\nonumber\\
        &=&-\int_{\R^{3}}\frac{u^{r}}{r}(u^{\theta})^{2}(u^{\theta})^{2}~dx\nonumber\\
        &=&\int_{\R^{3}}\partial_{3}\frac{\psi}{r}(u^{\theta})^{2}(u^{\theta})^{2}~dx\nonumber\\
&=&-2\int_{\R^{3}}\frac{\psi}{r}(u^{\theta})^{2}\partial_{3}(u^{\theta})^{2}~dx\nonumber \\
&=&-2\int_{\R^{3}}\frac{\psi}{r^{1-d}}(\frac{(u^{\theta})^{2}}{r})^{d}(u^{\theta})^{2(1-d)}\partial_{3}(u^{\theta})^{2}~dx\nonumber\\
&\leq&C \|\frac{\psi}{r^{1-d}}\|_{q}~\|\frac{(u^{\theta})^{2}}{r}~\|_{2}^{d}~\|(u^{\theta})^{2}\|_{2}^{(1-d)k}
\|(u^{\theta})^{2}\|_{6}^{(1-d)(1-k)}~\|\partial_{3}(u^{\theta})^{2}\|_{2} \nonumber\\
&\leq&C \|r^{d}u^{3}\|_{q}~\|\frac{(u^{\theta})^{2}}{r}~\|_{2}^{d}~\|(u^{\theta})^{2}\|_{2}^{(1-d)k}
\|\tilde\nabla(u^{\theta})^{2}\|_{2}^{1+(1-d)(1-k)}\nonumber \\
&\leq&C  \|r^{d}u^{3}\|_{q}^{\frac{2}{1-d-\frac{3}{q}}}~\|(u^{\theta})^{2}\|_{2}^{2}+\frac{1}{2}\|\frac{(u^{\theta})^{2}}{r}\|_{2}^{2}
+\frac{1}{2}\|\tilde\nabla(u^{\theta})^{2}\|_{2}^{2},\label{3.19}
\end{eqnarray}
for all $t\in[0,T^*)$. Applying Gronwall's inequality, we obtain
\begin{equation}
\sup_{t\in[0,T^*)}\|u^{\theta}(t)\|^{4}_{4} \leq \|u^{\theta}_0\|^{4}_{4}~ \exp (CT+C\|r^{d}u^{3}\|_{L^{p,q}_{T}}^p) < \infty.
\end{equation}

\textbf{Case 2.}  $r^du^3\in L^{\infty,\frac{3}{1-d}}_T$,  $0\leq d<1$. For a small constant $\varepsilon_1>0$ given in (\ref{**}), there exists $\beta>0$ such that
    \begin{equation}
      \|r^du^31_{r\leq \beta}\|_{L^{\infty,\frac{3}{1-d}}_T}\leq \varepsilon_1.
    \end{equation}

Using the similar arguments as that in the proof of (\ref{3.19}), we have for all $t\in[0,T^*)$
\begin{eqnarray}
&&\frac{1}{4}\frac{d}{dt}\|u^{\theta}\|_{4}^{4}+\frac{3}{4}\|\tilde\nabla(u^{\theta})^{2}\|_{2}^{2}+\|\frac{(u^{\theta})^{2}}{r}\|_{2}^{2}\nonumber\\
        &\leq&C \|r^{d}u^{3}1_{r\leq \beta}\|_{\frac{3}{1-d}}~\|\frac{(u^{\theta})^{2}}{r}\|_2^d~
\|\tilde\nabla(u^{\theta})^{2}\|_{2}^{2-d}
+C\|\frac{\psi}{r^{1-d}}\|_{\frac{3}{1-d}}\|1_{r\geq \beta}\frac{(u^{\theta})^{2}}{r^d}\|_{\frac{6}{1+2d}}\|\partial_{3}(u^{\theta})^{2}\|_2
\nonumber \\
&\leq&C \|r^{d}u^{3}1_{r\leq \beta}\|_{\frac{3}{1-d}}~\|\frac{(u^{\theta})^{2}}{r}\|_2^d~
\|\tilde\nabla(u^{\theta})^{2}\|_{2}^{2-d}\nonumber\\
    &&
+C_\beta\|r^{d}u^{3}\|_{\frac{3}{1-d}}(\|(u^{\theta})^{2}\|_{2}+\|(u^{\theta})^{2}\|_{2}^{\frac{1+2d}{3}}
\|\tilde{\nabla}(u^{\theta})^{2}\|_{2}^{\frac{2-2d}{3}})\|\partial_{3}(u^{\theta})^{2}\|_2
\nonumber \\
    &\leq&(\frac{1}{8}+C\varepsilon_1)(\|\tilde\nabla(u^{\theta})^{2}\|_{2}^{2}+\|\frac{(u^{\theta})^{2}}{r}\|_{2}^{2})
    +C_\beta(1+\|r^{d}u^{3}\|_{\frac{3}{1-d}}^\frac{6}{1+2d})\|(u^{\theta})^{2}\|_{2}^2.
\end{eqnarray}
Since
    \begin{equation}
      C\varepsilon_1=\frac{1}{8},\label{**}
    \end{equation}
and using Gronwall's inequality, we get
    \begin{equation}
\sup_{t\in[0,T^*)}\|u^{\theta}(t)\|^{4}_{4} \leq \|u^{\theta}_0\|^{4}_{4}~ \exp \{C_\beta T(1+\|r^{d}u^{3}\|_{L^{\infty,\frac{3}{1-d}}_{T}}^\frac{6}{1+2d})\} < \infty.
\end{equation}

Using the similar argument as that in the proof of Theorem \ref{thm1.2}, we have that $\mathbf{u}$ can be continued beyond $T^*$, which contradicts with the definition of $T^*$. Thus, $T^*>T$,
 which finishes Theorem \ref{thm1.5}. $\hfill\Box$

\vspace{0.5cm}

\noindent\textbf{Proof of Corollary \ref{cor1.6}.}\\

By (\ref{3.17}) and $r u^3\in L^{\infty,\infty}_T$, it is easy to show that
$$
\psi\in L^{\infty,\infty}_{T}.$$
Therefore
$$
\mathbf{b}=\nabla\times(\psi\boldsymbol{e_{\theta}}),~~~~\text{while}~~~
\psi\boldsymbol{e_{\theta}} \in L^{\infty,\infty}_{T}\hookrightarrow L^{\infty}((0,T);BMO).$$
From Theorem 1.4 in \cite{Z.Lei}, we get that  $\mathbf{u}$ is regular in $(0,T]\times \R^{3}$.  $\hfill\Box$

\section{Global well-posedness}\label{S4}

In this section, we are going to prove Theorem \ref{thm1.4} by the classical continuity method.
\begin{lem}\label{L4.1}
Let $\mathbf{u}\in C([0,T);H^2(\R^3))\cap L^2_{loc}([0,T);H^3(\R^3))$ be the unique axisymmetric solution of the Navier-Stokes equations with the axisymmetric initial data $\mathbf{u_{0}}\in H^{2}(\R^{3})$ and $\mathrm{div}~\mathbf{u_{0}}=0$. Assume $\|r^du^{\theta}\|_{L^{\infty}([0,t);L^{\frac{3}{1-d}}(\R^{3}))}\leq\frac{1}{C_{0}}$, where $C_{0}$ is a constant in (\ref{4.2}) and $t<T^{*}$. Then, we have
\begin{equation}\label{4.1}
\|\Phi(t)\|_{2}^{2}+\|\Gamma(t)\|_{2}^{2}\leq\|\Phi_{0}\|_{2}^{2}+\|\Gamma_{0}\|_{2}^{2}.
\end{equation}
\end{lem}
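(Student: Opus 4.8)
The goal is to establish the energy estimate \eqref{4.1} under the smallness assumption $\|r^d u^\theta\|_{L^\infty([0,t);L^{3/(1-d)})}\le \frac{1}{C_0}$. The plan is to start from the differential inequality \eqref{3.4} derived in the proof of Theorem \ref{thm1.2}, namely
\begin{equation*}
\frac{1}{2}\frac{d}{dt}(\|\Phi\|_{2}^{2}+\|\Gamma\|_{2}^{2})+\|\tilde\nabla\Phi\|_{2}^{2}+\|\tilde\nabla\Gamma\|_{2}^{2}\leq I_{1}+I_{2}+2I_{3},
\end{equation*}
and to estimate $I_1,I_2,I_3$ in the \emph{critical} endpoint case $q=\frac{3}{1-d}$. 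This is exactly the borderline exponent at which H\"older's inequality together with Lemma \ref{lem2.1} produces no leftover power of the $L^2$ norm: the interpolation exponents collapse so that each $I_j$ is bounded \emph{purely} by $\|r^d u^\theta\|_{3/(1-d)}$ times the dissipation terms $\|\tilde\nabla\Phi\|_2^2+\|\tilde\nabla\Gamma\|_2^2$, with no additive $\|\Gamma\|_2^2$ term and no time-integrable factor. This is the structural reason the smallness hypothesis replaces Gronwall here.

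Concretely, first I would redo the estimate \eqref{3.5}–\eqref{3.7} with $q=\frac{3}{1-d}$, so that $\frac{3}{q}+d=1$ and the low-order factors $\|\Gamma\|_2^{1-\frac3q-d}$, $\|\Phi\|_2^{1-\frac3q-d}$ become $\|\Gamma\|_2^{0}=1$. This yields bounds of the shape
\begin{equation*}
I_1+I_2+2I_3 \le C_0\,\|r^d u^\theta\|_{\frac{3}{1-d}}\,\bigl(\|\tilde\nabla\Phi\|_2^2+\|\tilde\nabla\Gamma\|_2^2\bigr),
\end{equation*}
where $C_0$ is precisely the constant flagged in \eqref{4.2}, absorbing the Riesz-operator bounds from \eqref{2.6}–\eqref{2.6-1} and the Sobolev--Hardy constants from Lemma \ref{lem2.1}. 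Then, invoking the hypothesis $\|r^d u^\theta\|_{3/(1-d)}\le \frac{1}{C_0}$ pointwise in time, the right-hand side is dominated by $\|\tilde\nabla\Phi\|_2^2+\|\tilde\nabla\Gamma\|_2^2$, which is exactly the dissipation on the left. Hence the dissipation terms cancel (or, more safely, one arranges the constants so that the factor is $\le 1$, leaving a nonnegative remainder of the gradient terms on the left), giving
\begin{equation*}
\frac{d}{dt}\bigl(\|\Phi\|_2^2+\|\Gamma\|_2^2\bigr)\le 0,
\end{equation*}
and integrating from $0$ to $t$ delivers \eqref{4.1}.

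The main obstacle is the bookkeeping of the constant $C_0$: because the cancellation is exact rather than merely a Gronwall absorption, one must track carefully that the \emph{same} constant $C_0$ appears in the hypothesis \eqref{4.2} and in the bound for $I_1+I_2+2I_3$, and that the threshold is chosen (e.g.\ $C_0\|r^d u^\theta\|_{3/(1-d)}\le 1$, matching the stated $\le \frac{1}{C_0}$) so that after subtracting one genuinely retains the full dissipation, or at least a nonnegative quantity, on the left. A secondary technical point is verifying the endpoint admissibility in Lemma \ref{lem2.1}: for $I_1,I_2$ one needs $s=\frac{2dq}{q-2}$ with $q_*=\frac{2q}{q-2}$ to satisfy $q_*\in[2,2(3-s)]$ and $0\le s<2$ at $q=\frac{3}{1-d}$, and likewise $s=\frac{(1+d)q}{q-1}$, $q_*=\frac{2q}{q-1}$ for $I_3$; these are the same choices used in Case 1 of Theorem \ref{thm1.2}, and one checks they remain valid at the critical exponent for $0\le d<1$. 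Once the constant is pinned down, the argument is a direct differential-inequality computation with no Gronwall step, which is the whole point of isolating this lemma before running the continuity method in the proof of Theorem \ref{thm1.4}.
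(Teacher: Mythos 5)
Your proposal is correct and follows essentially the same route as the paper: start from \eqref{3.4}, run the $I_1,I_2,I_3$ estimates of Lemma \ref{lem2.1} at the critical exponent $q=\frac{3}{1-d}$ (where the interpolation exponents collapse and no $\|\Gamma\|_2$ factor survives), obtain the bound \eqref{4.2} with the constant $C_0$, and absorb it into the dissipation via the smallness hypothesis to get $\frac{d}{dt}(\|\Phi\|_2^2+\|\Gamma\|_2^2)\le 0$. The only cosmetic difference is that the paper keeps the bound in product form $C_0\|r^du^{\theta}\|_{\frac{3}{1-d}}\|\tilde\nabla\Gamma\|_2\|\tilde\nabla\Phi\|_2$ and applies Young's inequality afterward, retaining half the dissipation, whereas you fold the Young step into the constant; both yield \eqref{4.1}.
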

\begin{proof}
From the Sobolev-Hardy inequality (\ref{2.7-0}) and (\ref{3.4}), we have \textit{a priori} estimate on $(0,t)$,  that
\begin{eqnarray}
 \frac{1}{2}\frac{d}{dt}(\|\Phi\|_{2}^{2}+\|\Gamma\|_{2}^{2})+\|\tilde\nabla\Phi\|_{2}^{2}+\|\tilde\nabla\Gamma\|_{2}^{2}
&\leq& I_{1}+I_{2}+2I_{3}\nonumber\\
&\leq&C_{0} \|r^du^{\theta}\|_{\frac{3}{1-d}}\|\tilde\nabla\Gamma\|_{2}\|\tilde\nabla\Phi\|_{2}\label{4.2}\\
&\leq& \frac{1}{2}\|\tilde\nabla\Gamma\|_{2}^{2}+\frac{1}{2}\|\tilde\nabla\Phi\|_{2}^{2},\nonumber
\end{eqnarray}
which implies (\ref{4.1}).
\end{proof}

\begin{lem}\label{L4.2}
Under the conditions in Lemma \ref{L4.1}, then  there exists a positive constant $C_{1}$, such that,
\begin{equation}\label{4.3}
\|\omega^{\theta}\|^{2}_{L^{\infty}((0,t);L^{2}(\R^{3}))}\leq \|\omega_{0}^{\theta}\|_{2}^{2}+C_{1}(\|\Gamma_{0}\|_{2}+\|\Phi_{0}\|_{2})^{\frac{4}{3}}\|\mathbf{u_{0}}\|_{2}^{2}.
\end{equation}
\end{lem}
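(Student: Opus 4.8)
The plan is to run an $L^{2}$ energy estimate directly on the scalar equation (\ref{1.3})$_2$ for $\omega^{\theta}$. Multiplying that equation by $\omega^{\theta}$ and integrating over $\R^{3}$ (with measure $r\,dr\,dx_{3}$), the convection term $(\mathbf{b}\cdot\nabla)\omega^{\theta}$ drops out by $\mathrm{div}\,\mathbf{b}=0$, while the diffusion operator $\partial_{r}^{2}+\partial_{3}^{2}+\frac{1}{r}\partial_{r}-\frac{1}{r^{2}}$ produces the good terms $\|\tilde\nabla\omega^{\theta}\|_{2}^{2}+\|\frac{\omega^{\theta}}{r}\|_{2}^{2}$, the $-\frac{1}{r^{2}}$ contributing the Hardy term with a favourable sign. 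This gives
$$\frac{1}{2}\frac{d}{dt}\|\omega^{\theta}\|_{2}^{2}+\|\tilde\nabla\omega^{\theta}\|_{2}^{2}+\left\|\tfrac{\omega^{\theta}}{r}\right\|_{2}^{2}=\underbrace{\int_{\R^{3}}\tfrac{2u^{\theta}\partial_{3}u^{\theta}}{r}\omega^{\theta}\,dx}_{A}+\underbrace{\int_{\R^{3}}\tfrac{u^{r}}{r}(\omega^{\theta})^{2}\,dx}_{B},$$
so the whole task reduces to controlling the swirl-production term $A$ and the stretching-type term $B$.

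For $B$ I would exploit that $\frac{u^{r}}{r}$ is controlled by $\Gamma=\frac{\omega^{\theta}}{r}$ via Lemma \ref{lem2.4}: by (\ref{2.6}) and the Sobolev embedding, $\|\frac{u^{r}}{r}\|_{6}\le C\|\tilde\nabla\frac{u^{r}}{r}\|_{2}\le C\|\Gamma\|_{2}$. H\"older ($\frac{1}{6}+\frac{5}{12}+\frac{5}{12}=1$) then gives $|B|\le C\|\Gamma\|_{2}\|\omega^{\theta}\|_{\frac{12}{5}}^{2}$, and interpolating $\|\omega^{\theta}\|_{\frac{12}{5}}\le\|\omega^{\theta}\|_{2}^{3/4}\|\omega^{\theta}\|_{6}^{1/4}$ together with $\|\omega^{\theta}\|_{6}\le C\|\tilde\nabla\omega^{\theta}\|_{2}$ yields $|B|\le C\|\Gamma\|_{2}\|\omega^{\theta}\|_{2}^{3/2}\|\tilde\nabla\omega^{\theta}\|_{2}^{1/2}$. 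Young's inequality, absorbing $\|\tilde\nabla\omega^{\theta}\|_{2}^{1/2}$ against $\frac{1}{4}\|\tilde\nabla\omega^{\theta}\|_{2}^{2}$ with conjugate exponent $\frac{4}{3}$, produces exactly $|B|\le\frac{1}{4}\|\tilde\nabla\omega^{\theta}\|_{2}^{2}+C\|\Gamma\|_{2}^{4/3}\|\omega^{\theta}\|_{2}^{2}$, which is the origin of the power $\frac{4}{3}$ in (\ref{4.3}). The decisive point is that I would \emph{not} close the estimate with Gronwall's inequality (which would yield an exponential rather than the clean bound (\ref{4.3})); instead, after integrating in time, I bound $\int_{0}^{t}\|\Gamma\|_{2}^{4/3}\|\omega^{\theta}\|_{2}^{2}\,d\tau\le\sup_{[0,t)}\|\Gamma\|_{2}^{4/3}\int_{0}^{t}\|\omega^{\theta}\|_{2}^{2}\,d\tau$. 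Lemma \ref{L4.1} gives $\sup\|\Gamma\|_{2}^{2}\le\|\Gamma_{0}\|_{2}^{2}+\|\Phi_{0}\|_{2}^{2}$, whence $\sup\|\Gamma\|_{2}^{4/3}\le(\|\Gamma_{0}\|_{2}+\|\Phi_{0}\|_{2})^{4/3}$, while $\|\omega^{\theta}\|_{2}\le\|\boldsymbol{\omega}\|_{2}=\|\nabla\mathbf{u}\|_{2}$ and the energy inequality (\ref{3.1}) give $\int_{0}^{t}\|\omega^{\theta}\|_{2}^{2}\,d\tau\le\frac{1}{2}\|\mathbf{u_{0}}\|_{2}^{2}$. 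This produces precisely the term $C_{1}(\|\Gamma_{0}\|_{2}+\|\Phi_{0}\|_{2})^{4/3}\|\mathbf{u_{0}}\|_{2}^{2}$ in (\ref{4.3}).

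The main obstacle is term $A$, because it cannot be absorbed into the dissipation $\|\tilde\nabla\omega^{\theta}\|_{2}^{2}$ of the present estimate. Writing $\frac{2u^{\theta}\partial_{3}u^{\theta}}{r}=\partial_{3}\frac{(u^{\theta})^{2}}{r}$, integrating by parts in $x_{3}$, and using $\partial_{3}\omega^{\theta}=r\partial_{3}\Gamma$, it becomes $A=-\int_{\R^{3}}(u^{\theta})^{2}\partial_{3}\Gamma\,dx$, so it is naturally controlled by $\|\tilde\nabla\Gamma\|_{2}$, the dissipation of the $\Gamma$-equation and not that of the $\omega^{\theta}$-equation. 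I would therefore borrow that dissipation: by H\"older, $|A|\le\|r^{d}u^{\theta}\|_{\frac{3}{1-d}}\,\|r^{-d}u^{\theta}\|_{\frac{6}{1+2d}}\,\|\partial_{3}\Gamma\|_{2}$, and the Sobolev--Hardy inequality (\ref{2.7-0}) applied at the scaling-critical endpoint $q_{*}=\frac{6}{1+2d}$, $s=\frac{6d}{1+2d}$ (where the $L^{2}$ exponent degenerates to $0$) gives $\|r^{-d}u^{\theta}\|_{\frac{6}{1+2d}}\le C\|\nabla u^{\theta}\|_{2}$. Using the smallness hypothesis $\|r^{d}u^{\theta}\|_{\frac{3}{1-d}}\le\frac{1}{C_{0}}$, Cauchy--Schwarz in time, the energy inequality, and the bound $\int_{0}^{t}\|\tilde\nabla\Gamma\|_{2}^{2}\,d\tau\le\|\Phi_{0}\|_{2}^{2}+\|\Gamma_{0}\|_{2}^{2}$ extracted from the proof of Lemma \ref{L4.1} (integrate (\ref{4.2}) in time), term $A$ is controlled purely by the data. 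Collecting the estimates for $A$ and $B$ in the time-integrated identity, the left-hand dissipation absorbs the $\frac{1}{4}\|\tilde\nabla\omega^{\theta}\|_{2}^{2}$ pieces and what survives is (\ref{4.3}), after fixing the constant $C_{1}$ to dominate both contributions.
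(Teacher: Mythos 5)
Your energy identity, your treatment of the stretching term $B$, and the bookkeeping that produces the factor $(\|\Gamma_{0}\|_{2}+\|\Phi_{0}\|_{2})^{4/3}\|\mathbf{u_{0}}\|_{2}^{2}$ (avoid Gronwall, pull out $\sup\|\Gamma\|_{2}^{4/3}$ via Lemma \ref{L4.1}, and use $\|\omega^{\theta}\|_{2}\le\|\nabla\mathbf{u}\|_{2}$ with the energy inequality (\ref{3.1})) are all correct and essentially what the paper does for that term. The gap is in your treatment of the swirl term $A$. Each individual step there is valid (the Hölder split with exponents $\frac{3}{1-d},\frac{6}{1+2d},2$, the endpoint case of (\ref{2.7-0}), and the bound $\int_{0}^{t}\|\tilde\nabla\Gamma\|_{2}^{2}\,d\tau\le\|\Phi_{0}\|_{2}^{2}+\|\Gamma_{0}\|_{2}^{2}$ extracted from (\ref{4.2})), but what it yields after Cauchy--Schwarz in time is an \emph{additive} contribution of size
\begin{equation*}
\frac{C}{C_{0}}\,\|\mathbf{u_{0}}\|_{2}\,(\|\Phi_{0}\|_{2}+\|\Gamma_{0}\|_{2}),
\end{equation*}
which is not of the form $C_{1}(\|\Gamma_{0}\|_{2}+\|\Phi_{0}\|_{2})^{4/3}\|\mathbf{u_{0}}\|_{2}^{2}$ and cannot be absorbed into it: for small data with $\|\mathbf{u_{0}}\|_{2}\sim\|\Phi_{0}\|_{2}+\|\Gamma_{0}\|_{2}\sim\epsilon$ your extra term is of order $\epsilon^{2}$ while the right-hand side of (\ref{4.3}) is of order $\epsilon^{10/3}$. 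So your argument proves a weaker variant of the lemma, not (\ref{4.3}) as stated; since (\ref{4.3}) is fed verbatim into the definition of $\mathscr{A}$ and the smallness condition (\ref{1.7}), the precise form matters.

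The missing idea is that $A$ does \emph{not} need the $\Gamma$-dissipation, the integration by parts, or the smallness hypothesis at all: since $\Phi=\frac{\omega^{r}}{r}=-\frac{\partial_{3}u^{\theta}}{r}$, one has $\frac{2u^{\theta}\partial_{3}u^{\theta}}{r}\,\omega^{\theta}=-2\Phi\,u^{\theta}\omega^{\theta}$, so $A$ can be estimated exactly in parallel with $B$:
\begin{equation*}
|A|\le 2\|\Phi\|_{2}\,\|u^{\theta}\|_{6}\,\|\omega^{\theta}\|_{3}\le C\|\Phi\|_{2}\,\|\nabla\mathbf{u}\|_{2}\,\|\omega^{\theta}\|_{2}^{\frac{1}{2}}\|\tilde\nabla\omega^{\theta}\|_{2}^{\frac{1}{2}},
\end{equation*}
where $\|\Phi\|_{2}$ is uniformly bounded by Lemma \ref{L4.1}. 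Young's inequality and $\|\omega^{\theta}\|_{2}\le\|\nabla\mathbf{u}\|_{2}$ then turn both $A$ and $B$ into $C(\|\Gamma_{0}\|_{2}+\|\Phi_{0}\|_{2})^{4/3}\|\nabla\mathbf{u}\|_{2}^{2}+\frac{1}{2}\|\tilde\nabla\omega^{\theta}\|_{2}^{2}$, and integrating in time with (\ref{3.1}) gives precisely (\ref{4.3}). In short: your claim that the swirl term ``cannot be absorbed into the dissipation of the present estimate'' is where you went off course; rewriting it through $\Phi$ makes it absorbable, keeps the proof symmetric in the two terms, and produces the stated constant structure.
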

\begin{proof}
Multiplying the equation (\ref{1.3})$_2$ by $\omega^{\theta}$, and integrating the resulting equations over $\R^{3}$ respectively, using the integration by parts,
 H\"{o}lder's inequality and the Sobolev embedding Theorem, we have
\begin{eqnarray}
 \frac{1}{2}\frac{d}{dt}\|\omega^{\theta}\|_{2}^{2}+(\|\tilde\nabla\omega^{\theta}\|_{2}^{2}+\|\Gamma\|_{2}^{2})
&=&\int_{\R^{3}}(\Gamma u^{r}-2\Phi u^{\theta})\omega^{\theta}~dx\nonumber\\
&\leq&2(\|\Gamma\|_{2}\|u^{r}\|_{6}+\|\Phi\|_{2}\|u^{\theta}\|_{6})\|\omega^{\theta}\|_{3}\nonumber\\
&\leq& C (\|\Gamma\|_{2}+\|\Phi\|_{2})\|\nabla\mathbf{u}\|_{2}\|\omega^{\theta}\|_{2}^{\frac{1}{2}}\|\tilde\nabla\omega^{\theta}\|_{2}^{\frac{1}{2}}\nonumber\\
&\leq& C(\|\Gamma_{0}\|_{2}+\|\Phi_{0}\|_{2})^{\frac{4}{3}}\|\nabla\mathbf{u}\|_{2}^{\frac{4}{3}}\|\omega^{\theta}\|_{2}^{\frac{2}{3}}+\frac{1}{2}\|\tilde\nabla\omega^{\theta}\|_{2}^{2}
\nonumber\\
&\leq& C(\|\Gamma_{0}\|_{2}+\|\Phi_{0}\|_{2})^{\frac{4}{3}} \|\nabla\mathbf{u}\|_{2}^{2}+\frac{1}{2}\|\tilde\nabla\omega^{\theta}\|_{2}^{2}.\label{4.4}
\end{eqnarray}
From (\ref{3.1}) and (\ref{4.4}), we get (\ref{4.4}).
\end{proof}

\begin{lem}\label{L4.3}
Under the conditions in Lemma \ref{L4.1}, then  $$\|r^du^{\theta}\|_{L^{\infty}([0,t);L^{\frac{3}{1-d}}(\R^{3}))}\leq\frac{1}{2C_{0}}$$ if $u_{0}^{\theta}$ satisfies (\ref{1.7}).
\end{lem}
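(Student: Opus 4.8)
The plan is to close the continuity argument by proving a self-contained energy estimate for the scaling-critical quantity $r^{d}u^{\theta}$ in $L^{3/(1-d)}$ and then to recognise that the exponent produced by Gronwall's inequality is exactly $\mathscr{A}$. Set $q=\frac{3}{1-d}$, the exponent for which $\|r^{d}u^{\theta}\|_{q}$ is scaling invariant, and test the equation $(\partial_{t}+\mathbf{b}\cdot\nabla-\Delta+\frac{2}{r}\partial_{r})\Theta=0$ for $\Theta=ru^{\theta}$ against $r^{-3}|\Theta|^{q-2}\Theta$ (equivalently, the $u^{\theta}$-equation in (\ref{1.2}) against $r^{dq}|u^{\theta}|^{q-2}u^{\theta}$). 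Using $\mathrm{div}\,\mathbf{b}=0$ and integrating by parts, the time derivative yields $\frac{1}{q}\frac{d}{dt}\|r^{d}u^{\theta}\|_{q}^{q}$, the diffusion yields a good weighted dissipation $D'=\frac{4(q-1)}{q^{2}}\int r^{-3}|\tilde\nabla(|\Theta|^{q/2})|^{2}\,dx$, and the transport together with the drift $\frac{2}{r}\partial_{r}$ produces the critical term $(1-d)\int\frac{u^{r}}{r}|r^{d}u^{\theta}|^{q}\,dx$ and, on the right-hand side, a borderline weighted term $(1-d)\int\frac{|r^{d}u^{\theta}|^{q}}{r^{2}}\,dx$.

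The main obstacle is this borderline $r^{-2}$ term, for which the naive control by $\|\tilde\nabla\cdot\|_{2}$ fails, exactly as warned in the Remark following Theorem \ref{thm1.2}. The remedy is to keep the dissipation in the weighted form $D'$ and to rewrite the borderline term as $\int r^{-5}|\Theta|^{q}\,dx$. Applying the one-dimensional Hardy inequality of Lemma \ref{lem2.2} in the radial variable (for a.e. $x_{3}$) to $W=|\Theta|^{q/2}$, with $q=2$ and $\sigma=4$ (admissible because $W$ vanishes on the axis, by Lemma \ref{lem2.3}), gives $\int r^{-5}|\Theta|^{q}\,dx\le\frac{q^{2}}{9(q-1)}D'$, whence $(1-d)\int\frac{|r^{d}u^{\theta}|^{q}}{r^{2}}\,dx\le\frac{1}{2+d}D'\le\frac12 D'$; the weight $r^{-3}$ is precisely what pushes $\sigma$ away from the forbidden value $1$. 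This leaves $\frac{1}{q}\frac{d}{dt}\|r^{d}u^{\theta}\|_{q}^{q}+\frac12 D'\le (1-d)\bigl|\int\frac{u^{r}}{r}|r^{d}u^{\theta}|^{q}\,dx\bigr|$, and one checks that $D'$ still dominates $\|\tilde\nabla(|r^{d}u^{\theta}|^{q/2})\|_{2}^{2}$.

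For the remaining critical term I would use the bound $\|\frac{u^{r}}{r}\|_{2}\le C\|\omega^{\theta}\|_{2}$ of Lemma \ref{lem2.3}, Hölder's inequality and the Gagliardo--Nirenberg inequality applied to $V=|r^{d}u^{\theta}|^{q/2}$:
\[
(1-d)\Bigl|\int\tfrac{u^{r}}{r}|r^{d}u^{\theta}|^{q}\,dx\Bigr|\le C\|\tfrac{u^{r}}{r}\|_{2}\|V\|_{4}^{2}\le C\|\omega^{\theta}\|_{2}\|V\|_{2}^{1/2}\|\tilde\nabla V\|_{2}^{3/2}\le\tfrac14 D'+C\|\omega^{\theta}\|_{2}^{4}\|r^{d}u^{\theta}\|_{q}^{q},
\]
since $\|V\|_{2}^{2}=\|r^{d}u^{\theta}\|_{q}^{q}$. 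Absorbing $\frac14 D'$ leaves $\frac{d}{dt}\|r^{d}u^{\theta}\|_{q}^{q}\le C\|\omega^{\theta}\|_{2}^{4}\|r^{d}u^{\theta}\|_{q}^{q}$, so Gronwall's inequality gives $\|r^{d}u^{\theta}(t)\|_{q}\le\|r^{d}u_{0}^{\theta}\|_{q}\exp\bigl(C\int_{0}^{t}\|\omega^{\theta}\|_{2}^{4}\,ds\bigr)$.

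It remains to bound the exponent by $\mathscr{A}$, which is the second delicate point: the coefficient $\|\omega^{\theta}\|_{2}^{4}$ must have a $t$-independent time integral. Indeed $\omega^{\theta}$ is a component of $\mathrm{curl}\,\mathbf{u}$, so $\|\omega^{\theta}\|_{2}\le\|\nabla\mathbf{u}\|_{2}$ and the energy inequality (\ref{3.1}) gives $\int_{0}^{t}\|\omega^{\theta}\|_{2}^{2}\,ds\le\frac12\|\mathbf{u_{0}}\|_{2}^{2}$; combining with the uniform bound of Lemma \ref{L4.2},
\[
\int_{0}^{t}\|\omega^{\theta}\|_{2}^{4}\,ds\le\Bigl(\sup_{s\in[0,t)}\|\omega^{\theta}(s)\|_{2}^{2}\Bigr)\int_{0}^{t}\|\omega^{\theta}\|_{2}^{2}\,ds\le\tfrac12\|\mathbf{u_{0}}\|_{2}^{2}\bigl(\|\omega_{0}^{\theta}\|_{2}^{2}+C_{1}(\|\Gamma_{0}\|_{2}+\|\Phi_{0}\|_{2})^{4/3}\|\mathbf{u_{0}}\|_{2}^{2}\bigr).
\]
Recalling $\Phi_{0}=-\partial_{3}\frac{u_{0}^{\theta}}{r}$ and $\|\Gamma_{0}\|_{2}\le\|\frac{\omega_{0}}{r}\|_{2}$, the right-hand side is $\mathscr{A}$ upon choosing the constant $C_{2}$, so that $C\int_{0}^{t}\|\omega^{\theta}\|_{2}^{4}\,ds\le\mathscr{A}$. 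Therefore $\|r^{d}u^{\theta}\|_{L^{\infty}([0,t);L^{3/(1-d)})}\le\|r^{d}u_{0}^{\theta}\|_{3/(1-d)}\exp(\mathscr{A})\le\frac{1}{2C_{0}}$ by the smallness hypothesis (\ref{1.7}), which is the assertion of the lemma.
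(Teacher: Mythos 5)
Your proof is correct, and its skeleton is the same as the paper's: test the $u^{\theta}$-equation (equivalently the $\Theta$-equation) against $r^{dq}|u^{\theta}|^{q-2}u^{\theta}$ with $q=\tfrac{3}{1-d}$, estimate the critical convection term by H\"older, $\|\tfrac{u^{r}}{r}\|_{2}\leq C\|\omega^{\theta}\|_{2}$ (Lemma \ref{lem2.3}), Gagliardo--Nirenberg and Young to get the coefficient $C\|\omega^{\theta}\|_{2}^{4}$, apply Gronwall, and then bound $\int_{0}^{t}\|\omega^{\theta}\|_{2}^{4}\,ds$ by $\sup\|\omega^{\theta}\|_{2}^{2}\cdot\int\|\omega^{\theta}\|_{2}^{2}\,ds$ using Lemma \ref{L4.2} and the energy inequality (\ref{3.1}), closing with (\ref{1.7}). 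The one place where you genuinely deviate is the borderline $r^{-2}$-weighted term. You keep the dissipation in the exterior-weight form $D'=\frac{4(q-1)}{q^{2}}\int r^{-3}|\tilde\nabla(|\Theta|^{q/2})|^{2}\,dx$, so the borderline term lands on the right-hand side with coefficient $(1-d)$, and you absorb it by the one-dimensional Hardy inequality of Lemma \ref{lem2.2} (with $\sigma=4$, giving the factor $\frac{1}{2+d}\leq\frac12$). The paper instead writes the dissipation with the weight inside the gradient, $\|\nabla(r^{dq/2}|u^{\theta}|^{q/2})\|_{2}^{2}$; with that bookkeeping the borderline term appears on the left with a nonnegative coefficient (a direct computation gives $1-d^{2}$, though the paper's printed coefficients $\frac{1-4d+(2+d)d^{2}}{1-d}$ and $\frac{1-3d}{1-d}$ look garbled) and is simply discarded, with no extra inequality needed. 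The two devices encode the same cancellation --- the difference between the two forms of the dissipation is exactly a multiple of the borderline term --- but your Hardy route makes the absorption explicit and quantitative, at the cost of the extra checks you correctly flag: that $W=|\Theta|^{q/2}$ vanishes on the axis so Hardy applies, and that $D'$ still dominates $\|\tilde\nabla(|r^{d}u^{\theta}|^{q/2})\|_{2}^{2}$ so that Gagliardo--Nirenberg can be run on the unweighted-gradient quantity. Both of these checks go through, so your argument is complete.
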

\begin{proof}
From (\ref{1.2})$_2$, using the integration by parts,
 H\"{o}lder's inequality and the Sobolev embedding Theorem, we have
\begin{eqnarray}
&&\frac{1-d}{3}\frac{d}{dt}\|r^du^{\theta}\|_{\frac{3}{1-d}}^{\frac{3}{1-d}}+\frac{4(1-d)(2+d)}{9}\|\nabla(r^{\frac{3d}{2(1-d)}}|u^{\theta}|^{\frac{3}{2(1-d)}})
\|_{2}^{2}\nonumber\\
&&+\frac{1-4d+(2+d)d^2}{1-d}\|r^{\frac{5d-2}{1-d}} (u^{\theta})^{\frac{3}{1-d}}\|_{1}\nonumber\\
&=&-\frac{1-3d}{1-d}\int_{\R^{3}} \frac{u^{r}}{r}~r^{\frac{3s}{1-d}}|u^{\theta}|^{\frac{3}{1-d}}dx\nonumber\\
&\leq&C\|\frac{u^{r}}{r}\|_{2}~\|r^{\frac{3d}{2(1-d)}}|u^{\theta}|^{\frac{3}{2(1-d)}}\|_{2}^{\frac{1}{2}}~\|r^{\frac{3d}{2(1-d)}}|u^{\theta}|^{\frac{3}{2(1-d)}}\|_{6}^{\frac{3}{2}}\nonumber\\
&\leq&C\|\omega^{\theta}\|_{2}^{4}\|r^du^{\theta}\|_{\frac{3}{1-d}}^{\frac{3}{1-d}}+\frac{(1-d)(2+d)}{9}\|\nabla(r^{\frac{3d}{2(1-d)}}|u^{\theta}|^{\frac{3}{2(1-d)}})
\|_{2}^{2}.
\end{eqnarray}
 Applying Gronwall's inequality, Lemma \ref{L4.2} and (\ref{1.7}), we get
\begin{eqnarray}
\|r^du^{\theta}\|_{\frac{3}{1-d}}&\leq&\|r^du^{\theta}_0\|_{\frac{3}{1-d}}\exp(C\int_{0}^{t}\|\omega^{\theta}\|_{2}^{4}~ds)\nonumber\\
&\leq&\|r^du^{\theta}_0\|_{\frac{3}{1-d}}\exp(C\|\omega^{\theta}\|^{2}_{L^{\infty}((0,t);L^{2}(\R^{3}))}   \int_{0}^{t}\|\omega^{\theta}\|_{2}^{2}~ds)\nonumber\\
&\leq&\|r^du^{\theta}_0\|_{\frac{3}{1-d}}\exp\left(C_{2}\|\mathbf{u_{0}}\|_{2}^{2}~(\|\omega_{0}^{\theta}\|_{2}^{2}+C_{1}(\|\Gamma_{0}\|_{2}+\|\Phi_{0}\|_{2})^{\frac{4}{3}}\|\mathbf{u_{0}}\|_{2}^{2})
\right)\nonumber\\
&\leq&\frac{1}{2C_{0}}.
\end{eqnarray}
\end{proof}

\noindent\textbf{Proof of Theorem \ref{thm1.4}.}

Assume that $T^*<\infty$.
From Lemma \ref{L4.3}, using the  classical continuity
method, we have $\|r^du^{\theta}\|_{L^{\infty}([0,T^*);L^{\frac{3}{1-d}}(\R^{3}))}\leq\frac{1}{C_{0}}$. Then, from  Lemma \ref{L4.1}, we have
$$
\sup_{t\in[0,T^*)}\|\Gamma(t)\|_{2}^{2}\leq\|\Phi_{0}\|_{2}^{2}+\|\Gamma_{0}\|_{2}^{2}.
$$
Applying Lemma \ref{lem2.6}, we have that  $\mathbf{u}$ can be continued beyond $T^*$,  which contradicts with the definition of $T^*$. Thus, $T^*=\infty$ and Theorem \ref{thm1.4} holds.  $\hfill\Box$

\section*{Acknowledgements}
  This work is
  partially supported by  NSF of
China under Grants 11271322,  11331005 and 11271017, National Program for
Special Support of Top-Notch Young Professionals.

\end{document}